\newtheorem{theorem}{Theorem}
\newtheorem{corollary}{Corollary}
\newtheorem{remark}{Remark}
\newtheorem{proposition}{Proposition}
\title{SHAPE AND TOPOLOGY OPTIMIZATION FOR VARIATIONAL INEQUALITIES
    WITH POINTWISE BOUNDARY OBSERVATION}
\author{Cornel Marius MUREA\footnote{{\tt cornel.murea@uha.fr}
University of Haute Alsace, Dept. of Mathematics, Mulhouse, France.} 
\and Dan TIBA\footnote{{\tt dan.tiba@imar.ro}
Inst. of Mathematics, Romanian Academy, Bucharest, Romania; Paper written with
financial support of ECOMATH France-Roumanie.}
}
\date{~}
\begin{document}

\maketitle

\begin{abstract}
  We study  optimal design problems involving variational inequalities with
  unilateral conditions in the domain and pointwise boundary observation.
  We use regularizing and penalization tehniques in the setting of the
  Hamiltonian approach to shape/topology optimization problems.
  Numerical examples are also included.
\end{abstract}

{\bf Keywords:} optimal design, pointwise observation, nonsmooth variational
inequalities.

{\bf MSC:}  35J87, 49M41

\section{Introduction}

We consider a fixed domain approach, based on level functions for the (unknown) geometry and on
the penalization/regularization of the state system. The shape and topology optimization problem
is associated to nonlinear second order elliptic equations with Dirichlet boundary conditions.
The optimal design problem can be approximated, via penalization, by an elliptic optimal control
problem (see \cite{c8}).

Our methodology  is underpinned by  functional variations \cite{npt}, \cite{MT2022}.
For linear and nonlinear state systems, it is known
that both closing and
creating new holes is possible. Moreover, we prove differentiability and we apply standard gradient
algorithms in the optimization process, that are known to converge locally, \cite{ber}.
For a detailed discussion of optimal control problems governed by nonlinear and nonsmooth state systems,
see \cite{book}, \cite{tiba}.

In the setting of  functional variations, one assumes that all the admissible domains $\Omega$ are
contained in a given bounded domain $D \subset \mathbb{R}^2$. We denote by $\mathcal{O}$
their family. We assume that 
this is  generated by a family $\mathcal{F} \subset \mathcal{C}(\overline{D})$ of
admissible
level functions, as open   sublevel sets:
\begin{equation}\label{eq:1.1}
\Omega = \Omega_g = {\rm int}\{ \mathbf{x} \in D: \; g(\mathbf{x}) \leq 0 \}, \quad  g \in \mathcal{F}.
\end{equation}

We recall  that functional variations have the form
\begin{equation}\label{eq:1.3}
g + \lambda h, \; \lambda \in \mathbb{R}, \; g,h \in \mathcal{F},
\end{equation}
which enables the work in functional spaces instead of the  very  specialized geometric variations.

To generate domains, we add to (\ref{eq:1.1}) a condition that
selects just the component satisfying ($\mathbf{x}_0 \in D$ is fixed)
\begin{equation}\label{eq:1.2}
\mathbf{x}_0 \in \partial\Omega_g, \quad \forall g \in \mathcal{F}.
\end{equation}

Notice that, for any admissible $g \in \mathcal{F}$, we have $g(\mathbf{x}_0) = 0$.

A general  shape and topology optimization problem has the form
(for $\Omega_g  \in \mathcal{O}$ defined as above):
\begin{eqnarray}
&&\min_{g \in \mathcal{F}} J(g, y_g), \label{eq:1.4}
\\
&& A y_g = f \quad {\rm in} \; \Omega_g, \label{eq:1.5}
\\
&& By_g = 0 \quad {\rm on} \; \partial\Omega_g. \label{eq:1.6}
\end{eqnarray}

In (\ref{eq:1.4}) - (\ref{eq:1.6}), $A$ is an elliptic partial differential
operator, $B$ is a
boundary operator, defining the state system and $J$ is some cost functional.
The operators may be linear or nonlinear and appropriate assumptions will be imposed as
necessity appears.

Assume now that $\mathcal{F} \subset \mathcal{C}^1(\overline{D})$ and
\begin{equation}\label{eq:1.7}
  \nabla g(\mathbf{x}) \neq 0, \; \forall g \in \mathcal{F},\;
  \forall  \mathbf{x} \in D,\;with\; g(\mathbf{x}) = 0 .
\end{equation}

Assume, as well, that
\begin{equation}\label{eq:1.11}
g > 0 \quad {\rm on} \; \partial D, \quad \forall g \in \mathcal{F},
\end{equation}
which ensures that $\partial\Omega_g \cap \partial D = \emptyset$.
If, moreover, the open cone $\mathcal{F} \subset \mathcal{C}^2(D)$   is given by (\ref{eq:1.7}),
(\ref{eq:1.11}),  then the solution of the Hamiltonian system below is periodic and  gives a global
parameterization of $\partial \Omega_g$

\begin{eqnarray}
z_1^\prime(t) & = & -\frac{\partial g}{\partial x_2}\left(z_1(t),z_2(t)\right),\quad t\in I_g,
\label{1:2.9}\\
z_2^\prime(t) & = &  \frac{\partial g}{\partial x_1}\left(z_1(t),z_2(t)\right),\quad t\in I_g,
\label{1:2.10}\\
\left(z_1(0),z_2(0)\right)& = & \mathbf{x}_0
\in \partial\Omega_g,
\label{1:2.11}
\end{eqnarray}

\noindent
by \cite{MT2022}, \cite{HMT2024}. We may take $I_g = [0, T_g]$,
where $T_g > 0$ denotes
the principal period in (\ref{1:2.9})-(\ref{1:2.11}). Moreover,
the dependence $g \in \mathcal{F} \to T_g$ has differentiability properties,  due to \cite{MT2022}
if the family of shape functions satisfies $\mathcal{F} \subset \mathcal{C}^2(D)$ and \ref{eq:1.7}),
(\ref{eq:1.11}).

In every $\Omega \in \mathcal{O}$, $\Omega = \Omega_g \subset D$, $g \in \mathcal{F}$,
we consider the
variational inequality:
\begin{eqnarray}
-\Delta y_g + \beta(y_g) & \ni & f \quad {\rm in} \; \Omega_g,  \label{eq:2.1}
\\
y_g &=& 0 \quad {\rm on} \; \partial\Omega_g,\label{eq:2.2}
\end{eqnarray}
where $f \in L^2(D)$ is given and $\beta \subset \mathbb{R}\times \mathbb{R}$ is a
maximal monotone graph satisfying
\begin{equation}\label{eq:2.3}
0 \in \beta(0).
\end{equation}

The nonlinear boundary value problem (\ref{eq:2.1}), (\ref{eq:2.2}) includes as
an example the obstacle
problem with obstacle $\varphi = 0$. In case the obstacle $\varphi : D \to \mathbb{R}$ is
some other
function (negative in $D$, to ensure compatibility with (\ref{eq:2.2}) for any
$\Omega \in \mathcal{O}$), then $\beta(y_g - \varphi)$ should be written in (\ref{eq:2.1}).

It is known,  \cite{6b}, that (\ref{eq:2.1}), (\ref{eq:2.2})
has a unique solution
$y_g \in H^2(\Omega_g) \cap H_0^1(\Omega_g)$ with $\beta(y_g) \in L^2(\Omega_g)$, if $\partial\Omega_g$
is in
$\mathcal{C}^{1,1}$.
Therefore the formulation (\ref{eq:1.4}), (\ref{eq:2.1}), (\ref{eq:2.2})
makes sense.
The regularity of
$\partial\Omega_g$ may be obtained from the implicit functions theorem since 
$\mathcal{F} \subset \mathcal{C}^2(D)$
and (\ref{eq:1.7}) is satisfied (we get even $\mathcal{C}^2$ regularity for
$\partial\Omega_g$).

We penalize and regularize (\ref{eq:2.1}) as follows ($\varepsilon > 0$ is ``small''):
\begin{eqnarray}
  -\Delta y_{\varepsilon} + \beta_{\varepsilon}(y_{\varepsilon})
  + \dfrac{1}{\varepsilon} H_{\varepsilon}(g)y_{\varepsilon}
  &=& f \quad {\rm in} \; D, \label{eq:2.4}
\\
y_{\varepsilon} &=& 0 \quad {\rm on} \; \partial D. \label{eq:2.5}
\end{eqnarray}

Above, $\beta_{\varepsilon}$ combines the Yosida approximation $\beta^{\varepsilon}$ with
Friedrichs mollifiers:
\begin{equation}\label{eq:2.6}
\beta_{\varepsilon}(y) = \int_{-\infty}^{\infty}\beta^{\varepsilon}(y - \varepsilon s)\rho(s)ds, 
\end{equation}
with $\rho \in \mathcal{C}_{0}^{\infty}(R)$, $\mathrm{supp}(\rho) \subset [-1, 1]$,
$\rho(-s) = \rho(s)$, $\rho(s) \geq 0$ and
$\int_{-\infty}^{\infty}\rho(s)ds = 1$.

Concerning $H(\cdot)$, it denotes the Heaviside function on $\mathbb{R}$ and its
maximal monotone extension in
$\mathbb{R} \times \mathbb{R}$ as well (i.e., we take $H(0) = [0,1]$).
  The notation $H_{\varepsilon}$ is obtained from
$H \subset \mathbb{R} \times \mathbb{R}$ again via Yosida
approximation and Friedrichs mollifiers, as in (\ref{eq:2.6}). Clearly, (\ref{eq:2.4}),
(\ref{eq:2.5})
has a unique solution $y_{\varepsilon} \in H^2(D) \cap H_0^1(D)$ if $\partial D$ is supposed to be of class
$\mathcal{C}^{1,1}$. 

This distributed penalization technique, using $H, H_{\varepsilon}$, is due to Natori and Kawarada
\cite{Kawarada1981}, in the setting
of free boundary problems, and employed in shape optimization, for instance, in \cite{npt},
 for linear elliptic equations.

 We indicate an approximation result, following \cite{SEMA_2024}, under weak regularity
 assumptions (we do not require $g\in \mathcal{F}$, for the moment) on the domain $\Omega$
 (class $\mathcal{C}$, i.e., continuous boundary or the segment property, see \cite{Adams1975},
 Def. 4.2, p. 66, \cite{ti2013}). We recall here that $\Omega$ is of class $\mathcal{C},$ if 
there exists a locally finite open cover of $\partial \Omega$ and an associated sequence of vectors
$\{y_j\}$, not null, such that for any $x \in \partial \Omega$ there is $j$ with $x + ty_j \in \Omega$
for $0<t<1$ and this remains valid on the associated neighborhood of $x$, in the open cover of
$\partial \Omega$. 

\medskip

\begin{theorem}\label{ch3_2:theo:2.1}
If $\Omega = \Omega_g$ is of class $\mathcal{C}$, then, on a subsequence,
  $y_{\varepsilon}|_{\Omega_g} \to y_g$ weakly in $H^1(\Omega_g)$ and $\Delta y_{\varepsilon} \to \Delta y_g$,
  $\beta_{\varepsilon}(y_{\varepsilon}) \to \beta(y_g)$ weakly in $L^2(\Omega_g)$.
If $g \in \mathcal{F}$, then
$\frac{\partial y_{\varepsilon}}{\partial \mathbf{n}} \to \frac{\partial y_g}{\partial \mathbf{n}}$
in $H^{-1/2}(\partial\Omega_g)$ weakly.
\end{theorem}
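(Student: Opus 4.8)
The plan is to derive uniform (in $\varepsilon$) a priori estimates on $y_\varepsilon$ from the penalized-regularized problem \eqref{eq:2.4}--\eqref{eq:2.5}, extract weakly convergent subsequences, and then identify the limit as the solution $y_g$ of the variational inequality \eqref{eq:2.1}--\eqref{eq:2.2}. The first step is to test \eqref{eq:2.4} with $y_\varepsilon$ itself. Using $0\in\beta(0)$ together with monotonicity of $\beta_\varepsilon$ (so $\beta_\varepsilon(y_\varepsilon)y_\varepsilon \ge -C\varepsilon$ up to the mollification shift) and the fact that $H_\varepsilon(g)\ge 0$, the penalization term $\tfrac{1}{\varepsilon}H_\varepsilon(g)y_\varepsilon^2\ge 0$ drops to the good side, yielding a bound $\|\nabla y_\varepsilon\|_{L^2(D)}\le C$ and, via Poincaré on $D$, $\|y_\varepsilon\|_{H^1_0(D)}\le C$ independent of $\varepsilon$. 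This simultaneously forces $\tfrac{1}{\varepsilon}\int_D H_\varepsilon(g)y_\varepsilon^2\,dx\le C$, which is the mechanism that drives $y_\varepsilon\to 0$ on the complement $D\setminus\Omega_g$ (where $g>0$, so $H_\varepsilon(g)\to 1$).

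Next I would obtain the $H^2(D)$ estimate. Rewriting \eqref{eq:2.4} as $-\Delta y_\varepsilon = f - \beta_\varepsilon(y_\varepsilon) - \tfrac1\varepsilon H_\varepsilon(g)y_\varepsilon$, I need the right-hand side bounded in $L^2(D)$ uniformly. The term $f$ is fixed; the penalization term requires a separate bound on $\tfrac1\varepsilon H_\varepsilon(g)y_\varepsilon$ in $L^2$, and $\beta_\varepsilon(y_\varepsilon)$ must be controlled in $L^2$. These are the standard but delicate estimates in this penalization scheme: one typically multiplies by $\beta_\varepsilon(y_\varepsilon)$ and by $\tfrac1\varepsilon H_\varepsilon(g)y_\varepsilon$ respectively and uses the monotonicity and sign structure, exploiting $H^2(D)\cap H^1_0(D)$ elliptic regularity on the fixed smooth domain $D$ (with $\partial D\in\mathcal{C}^{1,1}$). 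With $\|y_\varepsilon\|_{H^2(D)}\le C$ in hand, I extract a subsequence with $y_\varepsilon\rightharpoonup \tilde y$ weakly in $H^2(D)$, hence $\Delta y_\varepsilon\rightharpoonup\Delta\tilde y$ and $\beta_\varepsilon(y_\varepsilon)\rightharpoonup\chi$ weakly in $L^2(D)$, and $y_\varepsilon\to\tilde y$ strongly in $H^1(D)$ by compactness. Restricting to $\Omega_g$, the penalization term vanishes there in the limit (it only acts where $g>0$), so $-\Delta\tilde y+\chi=f$ in $\Omega_g$; a Minty-type argument using the strong $L^2_{\mathrm{loc}}(\Omega_g)$ convergence and maximal monotonicity of $\beta$ identifies $\chi=\beta(\tilde y)$. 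The class-$\mathcal{C}$ hypothesis on $\Omega_g$ enters precisely here, guaranteeing (via the segment property and density of smooth functions up to the boundary, as in \cite{Adams1975}) that $\tilde y|_{\partial\Omega_g}=0$, so that $\tilde y|_{\Omega_g}=y_g$ by uniqueness from \cite{6b}.

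For the final normal-derivative statement, I would upgrade the regularity assumption to $g\in\mathcal{F}$, so that $\partial\Omega_g$ is $\mathcal{C}^2$ and $y_g\in H^2(\Omega_g)$, giving $\tfrac{\partial y_g}{\partial\mathbf n}\in H^{1/2}(\partial\Omega_g)\hookrightarrow H^{-1/2}(\partial\Omega_g)$ a well-defined trace. The convergence $\tfrac{\partial y_\varepsilon}{\partial\mathbf n}\rightharpoonup\tfrac{\partial y_g}{\partial\mathbf n}$ in $H^{-1/2}(\partial\Omega_g)$ I would establish by a duality/Green's-formula argument: for any test trace realized by $\phi\in H^1(\Omega_g)$, the pairing $\langle\tfrac{\partial y_\varepsilon}{\partial\mathbf n},\phi\rangle$ equals $\int_{\Omega_g}(\Delta y_\varepsilon\,\phi+\nabla y_\varepsilon\cdot\nabla\phi)$, and passing to the limit using the already-established weak convergences $\Delta y_\varepsilon\rightharpoonup\Delta y_g$ in $L^2(\Omega_g)$ and $y_\varepsilon\rightharpoonup y_g$ in $H^1(\Omega_g)$ yields the corresponding pairing for $y_g$. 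I expect the main obstacle to be the uniform $L^2$ bound on the penalization term $\tfrac1\varepsilon H_\varepsilon(g)y_\varepsilon$ together with the boundary-trace identification $\tilde y|_{\partial\Omega_g}=0$ under the weak class-$\mathcal{C}$ regularity — this is where the geometry of $\Omega_g$, the sign of $g$ off $\overline{\Omega_g}$, and the precise form of $H_\varepsilon$ must all be combined carefully, rather than in the routine monotone-operator passage to the limit.
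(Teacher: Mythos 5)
The paper itself contains no proof of Theorem~\ref{ch3_2:theo:2.1}: the statement is quoted from \cite{SEMA_2024}, so the only comparison available is with the standard penalization argument that reference carries out, and your outline does follow that route (energy estimate, weak compactness, monotonicity/Minty identification of the limit, the segment property for the zero trace, Green's formula for the normal derivative). However, as written your proposal contains one step that fails and two that are named but not done. The failing step is the extraction of a subsequence converging weakly in $H^2(D)$: the energy estimate gives $\|y_\varepsilon\|_{H^1_0(D)}\le C$ and $\frac1\varepsilon\int_D H_\varepsilon(g)y_\varepsilon^2\,d\mathbf{x}\le C$, from which one only obtains
$\bigl\|\frac1\varepsilon H_\varepsilon(g)y_\varepsilon\bigr\|_{L^2(D)}^2=\frac{1}{\varepsilon^2}\int_D H_\varepsilon(g)^2y_\varepsilon^2\,d\mathbf{x}\le \frac{C}{\varepsilon}$,
i.e.\ an $O(\varepsilon^{-1/2})$ bound, not a uniform one. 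This is precisely why the theorem asserts convergence of $\Delta y_\varepsilon$ and $\beta_\varepsilon(y_\varepsilon)$ only in $L^2(\Omega_g)$ and not in $L^2(D)$; the compactness has to be obtained on $\Omega_g$ (for instance on the exhausting sets $\{g<-\delta\}$, where $H_\varepsilon(g)\equiv 0$ once $\varepsilon<\delta$, combined with a boundary-layer argument), not on all of $D$.

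Second, the two estimates you defer as ``standard but delicate'' are exactly the substance of the theorem. Multiplying the equation by $\beta_\varepsilon(y_\varepsilon)$ produces the cross term $\frac1\varepsilon\int_D H_\varepsilon(g)\,y_\varepsilon\,\beta_\varepsilon(y_\varepsilon)\,d\mathbf{x}$, and because the mollification in (\ref{eq:2.6}) destroys the exact identity $\beta^\varepsilon(0)=0$ (one only gets $|\beta_\varepsilon(0)|\le C$ from the Lipschitz constant $O(1/\varepsilon)$ of $\beta^\varepsilon$ on the shift $\varepsilon s$), the sign argument yields only the lower bound $-\frac{C}{\varepsilon}\int_D H_\varepsilon(g)|y_\varepsilon|\,d\mathbf{x}=O(\varepsilon^{-1/2})$ after Cauchy--Schwarz against the energy estimate; so the uniform $L^2$ bound on $\beta_\varepsilon(y_\varepsilon)$ does not follow from the routine monotone-operator computation you invoke and requires the finer structure exploited in \cite{SEMA_2024}. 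Your treatment of the trace identification (class $\mathcal{C}$ plus the vanishing of the limit a.e.\ on $\{g>0\}$ forced by the penalization bound) and of the normal derivative via the Green formula pairing with $H^1(\Omega_g)$ liftings are the right ideas and would go through, but only once the $L^2(\Omega_g)$ bounds on $\Delta y_\varepsilon$ and $\beta_\varepsilon(y_\varepsilon)$ are actually established.
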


\medskip

In the next section we discuss a special choice of the cost functional (\ref{eq:1.4}),
namely in the so called pointwise observation case, that describes realistic, but computationally
difficult situations.

\section{Pointwise observation}

In this section, an important role in the justification of the following computations and arguments
is played by the topological stability result from \cite{tiba2023}, for small functional
variational perturbations. Somewhat paradoxical, this stability result allows topological
changes in optimal design since the corresponding steps in the descent gradient algorithms
are not small, in general.

A natural and maybe the easiest case of pointwise boundary observation (taking into acount
(\ref{1:2.9})-(\ref{1:2.11})) is the following performance index:
\begin{equation}\label{eq:2.2.1}
  \min_{g \in \mathcal{F}} \left\{ J_1
  = \left|\frac{\partial y_{\varepsilon}}{\partial \mathbf{n}}(\mathbf{x}_0) - \alpha\right|^2\right\},
\end{equation}
where $\alpha \in \mathbb{R}$ is  given, $y_{\varepsilon}$ is the solution of (\ref{eq:2.4}),
(\ref{eq:2.5}) and $\mathbf{x}_0 \in \partial\Omega$ is the initial
condition in (\ref{1:2.9})-(\ref{1:2.11}). Other points may be taken into account as well,
see (\ref{eq:2.2.2}). The normal derivative refers to the domain $\Omega_g$ and can be
expressed as $\frac{\nabla  y_{\varepsilon} \cdot \nabla g}{| \nabla g |}$ in any point of $D$.
Notice that  (\ref{eq:2.2.1}) makes sense due to

\begin{remark}\label{r1}
  If $ \mathcal{F} \subset \mathcal{C}^2$ and $f \in H^1 (D)$, then  $y_{\varepsilon} \in H^3 (D) $ since
  $\beta_{\varepsilon}(y_{\varepsilon})$ and $H_{\varepsilon}(y_{\varepsilon})$ are in $H^2 (D)$ due to the
  regularity of $\beta_{\varepsilon}, H_{\varepsilon}$ and we
  know that $y_{\varepsilon} \in H^2 (D)$. Consequently, $\Delta y_{\varepsilon} \in H^1 (D)$ and we get
  $y_{\varepsilon} \in H^3 (D)$, by standard
  regularity results for the Laplace equation and  $y_{\varepsilon} \in \mathcal{C}^1 (\overline{D})$ by
  the Sobolev theorem. That is $\frac{\partial y_{\varepsilon}}{\partial \mathbf{n}}$ has a
  pointwise sense.
\end{remark}

We have denoted by $T_g$ the main period of the solution of (\ref{1:2.9})-(\ref{1:2.11})) corresponding
to $g \in \mathcal{F}$. Let $l$ be a given natural number and divide $[0,T_g]$ (and implicitly
$\partial \Omega_g$) in $l$  parts starting from the initial condition (\ref{1:2.11})
(that we may arbitrarily choose on $\partial \Omega_g$, in this case). The described process is
like a discretization procedure. We define in this way a preassigned number of observation points
on $\partial\Omega_g$ and another example of pointwise cost functional is
\begin{equation}\label{eq:2.2.2}
  \min_{g \in \mathcal{F}} \left\{J_2 =
  \sum_{i=0}^{l-1} \left|\frac{\partial y_{\varepsilon}}{\partial \mathbf{n}}
  \left(\mathbf{z}\left(\frac{iT_g}{l}\right)\right)
  - \alpha_i\right|^2\right\},
\end{equation}
where $\alpha_i \in \mathbb{R}$ are given. A variant of (\ref{eq:2.2.2}) is to choose the
observation points just on some part of $\partial\Omega_g$ , by fixing the indices $l_1 \leq i \leq l_2$,
with $l_1, l_2$ given natural numbers, in the corresponding sum (or by using another appropriate subset
of indices).

Of course, one can define pointwise observation functionals, by using points in $\Omega$, not
on $\partial\Omega$ (and under the additional condition that $g \in \mathcal{F}$ are negative in
such points). But this type of observation is less realistic, in general (it is difficult to place sensors inside a body) and we shall consider just the minimization of
tracking type functionals as in (\ref{eq:2.2.1}), (\ref{eq:2.2.2}).

In the sequel, we study the differentiability properties of (\ref{eq:2.2.1}), (for (\ref{eq:2.2.2})
the arguments are quite similar). We recall first the following two results from \cite{MT2019},
\cite{Tiba2013}, \cite{MT2022}, \cite{SEMA_2024}. We consider the perturbations of the parameters
(controls) $g+\lambda h$ and the system below  (\ref{3.7})-(\ref{3.10}) characterizes the
variations of the corresponding solutions of
the state system (\ref{eq:2.4}), (\ref{eq:2.5}), (\ref{1:2.9})-(\ref{1:2.11}). Furthermore,
$\mathbf{z}_{\lambda}$ denotes the solution of (\ref{1:2.9})-(\ref{1:2.11}) associated to
$g+\lambda h$, $T_\lambda$ denotes the main period of $\mathbf{z}_{\lambda}$ and we shall also
write $y_{\varepsilon}(g), y_{\varepsilon}(g+\lambda h)$ when we emphasize the dependence on the
level functions describing the geometry.

\begin{proposition}\label{prop:3.2}
  The system in variations corresponding to (\ref{eq:2.4}),
  (\ref{eq:2.5}),\\
(\ref{1:2.9})-(\ref{1:2.11}) is:
\begin{eqnarray}
  -\Delta u + \beta^\prime_{\varepsilon}(y_{\varepsilon})u + \dfrac{1}{\varepsilon}H_{\varepsilon}(g)u
  + \dfrac{1}{\varepsilon}H^\prime_{\varepsilon}(g)y_{\varepsilon}h &=& 0 \quad in \; D,
  \label{3.7}\\
u &=& 0 \quad on \; \partial D. \nonumber 
\end{eqnarray}
\begin{eqnarray}
w_1^\prime& = & -\nabla\partial_2 g(\mathbf{z})\cdot \mathbf{w}
-\partial_2 h(\mathbf{z}),\quad\hbox{in } [0,T_g],
\label{3.8}\\
w_2^\prime& = & \nabla\partial_1 g(\mathbf{z})\cdot \mathbf{w}
+\partial_1 h(\mathbf{z}),\quad\hbox{in } [0,T_g],
\label{3.9}\\
w_1(0)& = &0,\ w_2(0) = 0,
\label{3.10}
\end{eqnarray}
where $u=\lim_{\lambda\rightarrow 0}\frac{y_\varepsilon (g + \lambda h)-y_\varepsilon (g)}{\lambda}$, 
$\mathbf{w}=[w_1,w_2]=\lim_{\lambda\rightarrow 0}
\frac{\mathbf{z}_{\lambda} - \mathbf{z}}{\lambda}$
and the limits exist in $H^2 (D)$, 
respectively $\mathcal{C}^1([0,T_g])^2$. 
\end{proposition}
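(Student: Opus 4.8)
The plan is to exploit the fact that the two systems (\ref{3.7}) and (\ref{3.8})--(\ref{3.10}) are \emph{decoupled}: the state variation $u$ solves a linear elliptic problem in which $\mathbf{w}$ does not appear, while the boundary variation $\mathbf{w}$ solves a linear ODE in which $u$ does not appear. I would therefore treat the two differentiability statements separately.

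For the ODE part, I would invoke the classical theorem on $\mathcal{C}^1$ dependence of solutions of ordinary differential equations on a parameter. Writing the Hamiltonian system for $g+\lambda h$ as $\mathbf{z}_\lambda' = F(\mathbf{z}_\lambda,\lambda)$ with $F(\mathbf{z},\lambda) = \bigl(-\partial_2(g+\lambda h)(\mathbf{z}),\, \partial_1(g+\lambda h)(\mathbf{z})\bigr)$ and $\mathbf{z}_\lambda(0)=\mathbf{x}_0$, the hypothesis $\mathcal{F}\subset\mathcal{C}^2(D)$ makes $F$ of class $\mathcal{C}^1$ jointly in $(\mathbf{z},\lambda)$. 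On the fixed compact interval $[0,T_g]$ the solution map $\lambda\mapsto\mathbf{z}_\lambda$ is then $\mathcal{C}^1$ into $\mathcal{C}^1([0,T_g])^2$, and differentiating the ODE at $\lambda=0$ reproduces (\ref{3.8})--(\ref{3.9}) via the chain rule, with $\mathbf{w}(0)=0$ in (\ref{3.10}) because $\mathbf{z}_\lambda(0)=\mathbf{x}_0$ is independent of $\lambda$. Note that for the variational system on the \emph{fixed} interval $[0,T_g]$ one does not need the differentiability of the period map $\lambda\mapsto T_\lambda$; the latter, available from \cite{MT2022}, is only needed later when the observation points $\mathbf{z}(iT_g/l)$ are themselves varied.

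For the state part, I would apply the implicit function theorem in Banach spaces. Defining $\Phi(g,y) = -\Delta y + \beta_{\varepsilon}(y) + \frac{1}{\varepsilon}H_{\varepsilon}(g)y - f$ as a map $\mathcal{C}^2(\overline{D})\times\bigl(H^2(D)\cap H_0^1(D)\bigr)\to L^2(D)$, we have $\Phi(g,y_{\varepsilon}(g))=0$, and $\Phi$ is of class $\mathcal{C}^1$ because $\beta_{\varepsilon}$ and $H_{\varepsilon}$ are smooth with bounded derivatives (being Yosida approximations composed with Friedrichs mollifiers). The partial differential $\partial_y\Phi(g,y_{\varepsilon})$ is the linear map $v\mapsto -\Delta v + \beta_{\varepsilon}'(y_{\varepsilon})v + \frac{1}{\varepsilon}H_{\varepsilon}(g)v$; since $\beta_{\varepsilon}'\geq 0$ (monotonicity of $\beta$) and $H_{\varepsilon}(g)\geq 0$, this operator is coercive on $H_0^1(D)$ and hence, together with elliptic regularity on the $\mathcal{C}^{1,1}$ domain $D$, an isomorphism onto $L^2(D)$. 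The implicit function theorem then yields that $g\mapsto y_{\varepsilon}(g)$ is $\mathcal{C}^1$ with values in $H^2(D)\cap H_0^1(D)$, which is exactly the asserted existence of the $H^2$ limit, and differentiating $\Phi(g,y_{\varepsilon}(g))=0$ in the direction $h$ gives $\partial_y\Phi\cdot u=-\partial_g\Phi\cdot h$, i.e. equation (\ref{3.7}).

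I expect the main technical point to be the passage to the limit in the nonlinear terms, equivalently the verification that the difference quotient $u_\lambda=(y_{\varepsilon}(g+\lambda h)-y_{\varepsilon}(g))/\lambda$ converges \emph{strongly} in $H^2(D)$ and not merely weakly. If one prefers a direct energy argument to the implicit function theorem, one subtracts the equations for $y_{\varepsilon}(g+\lambda h)$ and $y_{\varepsilon}(g)$, divides by $\lambda$, and uses the coercivity of the linearized operator to obtain a uniform $H^2$ bound on $u_\lambda$; the terms $\bigl(\beta_{\varepsilon}(y_{\varepsilon}(g+\lambda h))-\beta_{\varepsilon}(y_{\varepsilon}(g))\bigr)/\lambda$ and $\frac{1}{\varepsilon\lambda}\bigl(H_{\varepsilon}(g+\lambda h)y_{\varepsilon}(g+\lambda h)-H_{\varepsilon}(g)y_{\varepsilon}(g)\bigr)$ are then controlled through the smoothness of $\beta_{\varepsilon},H_{\varepsilon}$ and a preliminary strong continuity statement $y_{\varepsilon}(g+\lambda h)\to y_{\varepsilon}(g)$ in $H^2(D)$. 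Since the residual in the equation for $u_\lambda-u$ is an $L^2$ source tending to zero, a final application of elliptic regularity upgrades the weak limit to the strong $H^2$ convergence claimed. The boundedness of the regularized derivatives $\beta_{\varepsilon}'$ and $H_{\varepsilon}'$ is precisely what makes all of these limits well defined for fixed $\varepsilon>0$.
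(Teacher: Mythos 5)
Your argument is essentially correct, but note that the paper itself gives no proof of Proposition~\ref{prop:3.2}: the result is recalled from \cite{MT2019}, \cite{Tiba2013}, \cite{MT2022}, \cite{SEMA_2024}, so the comparison has to be with what the paper does around the proposition rather than with a formal proof. For the ODE part, your reduction to the classical $\mathcal{C}^1$-dependence-on-parameters theorem is the standard route and matches the cited sources; your remark that the period map plays no role on the fixed interval $[0,T_g]$ is correct and consistent with the fact that the derivative of $T_\lambda$ is isolated in Proposition~\ref{prop:3.3}. For the state part you use the implicit function theorem, whereas the paper's own in-text computation (the refinement (\ref{eq:2.88}) and the discussion following it) works directly with the difference quotient, exactly as in your fallback ``energy argument''. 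The two are equivalent here because, for fixed $\varepsilon>0$, $\beta_\varepsilon$ and $H_\varepsilon$ are smooth with bounded derivatives and the linearized operator is coercive (using $\beta_\varepsilon'\ge 0$ and $H_\varepsilon\ge 0$); the IFT buys Fr\'echet (not merely directional) differentiability and the strong $H^2(D)$ convergence in one stroke, while the direct quotient argument is the one the paper then pushes further, using $f\in H^1(D)$ and the regularity of the coefficients, to get $\Delta u_\lambda$ bounded in $H^1(D)$ and hence convergence in $\mathcal{C}^1(\overline{D})$, which is what the pointwise cost actually requires. Two small points you should make explicit: the $\mathcal{C}^1$ character of your map $\Phi$ into $L^2(D)$ relies on the embedding $H^2(D)\hookrightarrow \mathcal{C}(\overline{D})$ in dimension two, so that the Nemytskii operators $y\mapsto\beta_\varepsilon(y)$ and $g\mapsto H_\varepsilon(g)$ are continuously differentiable; and the admissibility of $g+\lambda h$, i.e.\ $h(\mathbf{x}_0)=0$, which you implicitly use to get $\mathbf{w}(0)=0$ from $\mathbf{z}_\lambda(0)=\mathbf{x}_0$, should be stated as a hypothesis, as the paper does later in Proposition~\ref{p5}.
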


\begin{proposition}\label{prop:3.3}
  Under the assumptions (\ref{eq:1.7}), (\ref{eq:1.11}), if $T_\lambda$ denotes the main period of
  the solution of (\ref{1:2.9})-(\ref{1:2.11}), we have:
$$
\lim_{\lambda\rightarrow 0}\frac{T_\lambda-T_g}{\lambda}
=-\frac{w_2(T_g)}{z_2^\prime (T_g)}
$$
if $z_2^\prime (T_g) \neq 0$.
\end{proposition}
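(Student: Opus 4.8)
The plan is to encode the period $T_\lambda$ as the root of a single scalar return equation and then to differentiate that equation. Since $\mathbf{z}_\lambda$ solves (\ref{1:2.9})-(\ref{1:2.11}) for $g+\lambda h$ with the common initial condition $\mathbf{z}_\lambda(0)=\mathbf{x}_0$ from (\ref{1:2.11}), and since $T_\lambda$ is its principal period, periodicity gives $\mathbf{z}_\lambda(T_\lambda)=\mathbf{z}_\lambda(0)=\mathbf{x}_0$. Reading this off in the second component (written $z_{\lambda,2}$) yields the identity $z_{\lambda,2}(T_\lambda)=x_{0,2}$, valid for all small $\lambda$, with a right-hand side independent of $\lambda$. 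This scalar identity is the whole point of departure.

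Accordingly, I would introduce $\Psi(\lambda,T):=z_{\lambda,2}(T)-x_{0,2}$. Because $g,h\in\mathcal{C}^2$, the Hamiltonian vector field $(-\partial_2(g+\lambda h),\partial_1(g+\lambda h))$ is $\mathcal{C}^1$ in the state and affine in $\lambda$, so $\mathbf{z}_\lambda(T)$ is jointly $\mathcal{C}^1$ in $(\lambda,T)$ and $\Psi$ is $\mathcal{C}^1$. At the base point, $\Psi(0,T_g)=z_2(T_g)-x_{0,2}=0$ and $\partial_T\Psi(0,T_g)=z_2^\prime(T_g)$, which is nonzero precisely under the stated hypothesis. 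The implicit function theorem then furnishes a $\mathcal{C}^1$ branch $\lambda\mapsto T(\lambda)$ with $T(0)=T_g$ solving $\Psi(\lambda,T(\lambda))=0$, and at $\lambda=0$ one gets $T^\prime(0)=-\partial_\lambda\Psi(0,T_g)/\partial_T\Psi(0,T_g)$. By Proposition \ref{prop:3.2} the variation $\partial_\lambda z_{\lambda,2}(T_g)|_{\lambda=0}$ equals $w_2(T_g)$, whence $T^\prime(0)=-w_2(T_g)/z_2^\prime(T_g)$, which is the claimed formula. Equivalently, and more transparently, one may differentiate the identity $z_{\lambda,2}(T_\lambda)=x_{0,2}$ directly by the chain rule to obtain $w_2(T_g)+z_2^\prime(T_g)\,T^\prime(0)=0$, isolating $T^\prime(0)$ at once.

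The step I expect to require the most care is confirming that the branch $T(\lambda)$ produced by the implicit function theorem is genuinely the principal period $T_\lambda$, and not merely a time at which the second coordinate happens to recross the level $x_{0,2}$. Indeed the trajectory stays on the Jordan curve $\partial\Omega_{g+\lambda h}$ (the global parameterization guaranteed by (\ref{eq:1.7}), (\ref{eq:1.11}) for small $\lambda$), and the second coordinate may attain the value $x_{0,2}$ at more than one point of that curve. To rule this out I would use that $z_2^\prime(T_g)=\partial_1 g(\mathbf{x}_0)\neq 0$, so $z_2$ is strictly monotone in $t$ on a neighbourhood of $t=T_g$; hence near $\mathbf{x}_0$ the second coordinate parameterizes the curve injectively, and matching $z_{\lambda,2}(T(\lambda))=x_{0,2}$ while remaining close to $\mathbf{x}_0$ forces the full return $\mathbf{z}_\lambda(T(\lambda))=\mathbf{x}_0$.

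Finally, I would argue that this return time is the smallest positive one once $\lambda$ is small. This follows from the continuity, indeed differentiability, of $g\mapsto T_g$ recalled in the introduction (from \cite{MT2022}): it keeps $T_\lambda$ close to the minimal period $T_g$ and so precludes the emergence of a spurious shorter period. With existence, differentiability, and the identification of $T_\lambda$ settled, the formula of the Proposition is exactly the derivative computed above, and the hypothesis $z_2^\prime(T_g)\neq 0$ is seen to play the double role of the nondegeneracy needed for the implicit function theorem and of the nonvanishing denominator in the final expression.
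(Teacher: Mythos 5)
Your proof is correct and follows essentially the route of the sources the paper cites for this result (the paper itself only recalls Proposition~\ref{prop:3.3} from \cite{MT2022}, \cite{Tiba2013}, \cite{ti2018} without reproducing a proof): one differentiates the return identity $z_{\lambda,2}(T_\lambda)=x_{0,2}$ via the implicit function theorem, with $z_2^\prime(T_g)\neq 0$ serving both as the nondegeneracy condition and as the denominator. Your care in upgrading the scalar condition to the full return $\mathbf{z}_\lambda(T(\lambda))=\mathbf{x}_0$ (using that the level curve is locally a graph $x_1=\psi_\lambda(x_2)$ near $\mathbf{x}_0$) and in identifying the branch with the principal period is exactly the point that needs attention, and your treatment of it is adequate.
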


\begin{remark}\label{rem:3.2}
If $z_1^\prime (T_g)\neq 0$, the limit is
$-\frac{w_1(T_g)}{z_1^\prime (T_g)}$. In general, we denote by
$\theta(g,r)$ this limit. Either the last condition in Proposition \ref{prop:3.3} or this condition here
follow by (\ref{eq:1.7}). In case both conditions are valid, it is known that the two mentioned
results coincide (this is a consequence of Prop.1 in \cite{ti2018}).
\end{remark}  

The  directional derivatives of cost functionals can be computed as follows.

We consider the cost (\ref{eq:2.2.1}) and we notice (under the notations of Prop.\;\ref{prop:3.2} and
by Rem.\ref{r1}) that the computations to obtain (\ref{3.7}) can be slightly refined as follows
($\varepsilon$ is fixed here):
\begin{eqnarray}
&& -\Delta(y_\varepsilon (g + \lambda h) - y_{\varepsilon}(g))
  + \beta_{\varepsilon}(y_\varepsilon (g + \lambda h))
  - \beta_{\varepsilon}(y_{\varepsilon}(g))
  \nonumber \\
&& + \dfrac{1}{\varepsilon}H_{\varepsilon}(g + \lambda h)(y_\varepsilon (g + \lambda h) - y_{\varepsilon}(g))
\nonumber \\
&& - \dfrac{1}{\varepsilon}\left[ H_{\varepsilon}(g + \lambda h) - H_{\varepsilon}(g) \right]y_{\varepsilon}(g) = 0.
\label{eq:2.88}
\end{eqnarray}
Dividing (\ref{eq:2.88}) by $\lambda \neq 0$ and using Prop.\ref{prop:3.2}, we get
$\Delta \frac{y_\varepsilon (g + \lambda h)-y_\varepsilon (g)}{\lambda}$ bounded in $H^1 (D)$ due to
the regularity properties of $\beta_{\varepsilon}, H_{\varepsilon}$, that is
$\frac{y_\varepsilon (g + \lambda h)-y_\varepsilon (g)}{\lambda} \rightarrow u$ strongly in 
$\mathcal{C}^1 (D)$, due to the Sobolev theorem.  Then, we can compute the derivative of the
pointwise cost (\ref{eq:2.2.1}). First, we notice that:
\begin{eqnarray}
  L &=& \lim_{\lambda \rightarrow 0}\frac{1}{\lambda}
  \left[\left|\frac{\partial y_{\varepsilon}(g + \lambda h)}{\partial \mathbf{n}} ( \mathbf{x}_0) - \alpha\right|^2
    -  \left|\frac{\partial y_{\varepsilon}(g)}{\partial \mathbf{n}} ( \mathbf{x}_0) - \alpha\right|^2\right]
  \nonumber \\
  &=&  2\left(\frac{\partial y_{\varepsilon}(g)}{\partial \mathbf{n}}( \mathbf{x}_0)
  - \alpha\right)\frac{\partial  u}{\partial \mathbf{n}}(\mathbf{x}_0).
\label{eq:2.99}
\end{eqnarray}

In (\ref{eq:2.99}), we have decomposed the first paranthesis in the product of the sum by the
difference of the squared terms and we  apply Prop.\ref{prop:3.2} and the above argument,
to pass to the limit. We continue with the other term:
\begin{equation}
\lim_{\lambda \rightarrow 0}\frac{1}{\lambda}
\left[\left|\frac{\partial y_{\varepsilon}(g + \lambda h)}{\partial \mathbf{n}_{\lambda}} ( \mathbf{x}_0) - \alpha\right|^2
-\left|\frac{\partial y_{\varepsilon}(g + \lambda h)}{\partial \mathbf{n}} ( \mathbf{x}_0) - \alpha\right|^2\right].
  \label{eq:2.77}
\end{equation}
where $n_{\lambda}$ is the normal in $\mathbf{x}_0$ to $\partial \Omega_{\lambda}$, $\Omega_{\lambda}$ being
the domain associated to $g+\lambda h$ via (\ref{eq:1.1}), (\ref{eq:1.2}). After the decomposition of
the difference of the squares, the important part to be investigated is:
\begin{eqnarray}
&& \lim_{\lambda \rightarrow 0}\frac{1}{\lambda}
  \left[\frac{\partial y_{\varepsilon}(g + \lambda h)}{\partial \mathbf{n}_{\lambda}}( \mathbf{x}_0)
    - \frac{\partial y_{\varepsilon}(g + \lambda h)}{\partial \mathbf{n}} ( \mathbf{x}_0)\right]
  \nonumber \\
&=&  \lim_{\lambda \rightarrow 0}\frac{1}{\lambda}
\left[\frac{\nabla y_{\varepsilon}(g + \lambda h) \cdot \nabla(g+\lambda h)}{|\nabla(g+\lambda h)|}
(\mathbf{x}_0) - \frac{\nabla y_{\varepsilon}(g + \lambda h) \cdot \nabla g}{|\nabla g|}( \mathbf{x}_0) \right]
\label{eq:2.78}
\end{eqnarray}
where we apply that $g = 0$ on $\partial \Omega$ and $g+ \lambda h = 0$ on  $\partial \Omega_{\lambda}$
and the well known formula for the normal derivative. By some calculations, we get the limit:
$$
M = \left[ -\frac{\partial y_{\varepsilon}(g)}{\partial \mathbf{n}} \frac{\partial h}{\partial \mathbf{n}}\frac{1}{|\nabla g|}
  + \frac{\nabla y_{\varepsilon}(g) \cdot \nabla h}{|\nabla g|}\right]( \mathbf{x}_0).
$$
For the first term above, we have used that:
\begin{equation}\label{eq:2.79}
  \lim_{\lambda \rightarrow 0}\frac{1}{\lambda}
      \left[\frac{1}{|\nabla(g + \lambda h)|} - \frac{1}{|\nabla(g)|}\right]( \mathbf{x}_0)
      = -\frac{\partial h}{\partial \mathbf{n}}( \mathbf{x}_0) \frac{1}{|\nabla g ( \mathbf{x}_0) |^2}.
\end{equation}
Combining (\ref{eq:2.99}) - (\ref{eq:2.79}) with $M$, we infer:

\begin{proposition}\label{p5}
  Assume that  $ \mathcal{F} \subset \mathcal{C}^2$, $f \in H^1 (D)$ and (\ref{eq:1.7}), (\ref{eq:1.11})
  are valid. Then, the  directional directional derivative of the cost (\ref{eq:2.2.1}) with respect
  to functional variations $g+\lambda h$, $g, \; h$ satisfying the above assumptions and
  $g(\mathbf{x}_0) = 0,\; h(\mathbf{x}_0) = 0$ is given by:
\begin{eqnarray}
&&  L + 2M\left(\frac{\partial y_{\varepsilon}(g)}{\partial \mathbf{n}}( \mathbf{x}_0) - \alpha\right) = \nonumber \\
&& 2\left(\frac{\partial y_{\varepsilon}(g)}{\partial \mathbf{n}}( \mathbf{x}_0) - \alpha\right)
\left[\frac{\partial  u}{\partial \mathbf{n}}
- \frac{\partial y_{\varepsilon}(g)}{\partial \mathbf{n}}
  \frac{\partial h}{\partial \mathbf{n}}\frac{1}{|\nabla g|}
+ \frac{\nabla y_{\varepsilon}(g) \cdot \nabla h}{|\nabla g|}\right]( \mathbf{x}_0).
\label{eq:2.80}
\end{eqnarray}
\end{proposition}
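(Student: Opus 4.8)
The plan is to express the directional derivative of $J_1$ as the limit, as $\lambda\to 0$, of the full difference quotient
\[
\frac{1}{\lambda}\left[\left|\frac{\partial y_{\varepsilon}(g+\lambda h)}{\partial \mathbf{n}_{\lambda}}(\mathbf{x}_0) - \alpha\right|^2
- \left|\frac{\partial y_{\varepsilon}(g)}{\partial \mathbf{n}}(\mathbf{x}_0) - \alpha\right|^2\right],
\]
and to separate the two sources of variation — the change of the state $y_{\varepsilon}$ and the change of the outward normal at $\mathbf{x}_0$ — by inserting the intermediate quantity $\left|\frac{\partial y_{\varepsilon}(g+\lambda h)}{\partial \mathbf{n}}(\mathbf{x}_0) - \alpha\right|^2$, which uses the \emph{perturbed} state but the \emph{unperturbed} normal $\mathbf{n}$. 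This splits the quotient into the sum of the quotient handled in (\ref{eq:2.99}) and the quotient (\ref{eq:2.77}). A preliminary remark is that the assumptions $g(\mathbf{x}_0)=0$ and $h(\mathbf{x}_0)=0$ guarantee $(g+\lambda h)(\mathbf{x}_0)=0$, so $\mathbf{x}_0\in\partial\Omega_{\lambda}$ for every small $\lambda$; thus all three normal derivatives are evaluated at the same geometric point and the pointwise functional is well defined along the perturbation.

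For the first summand I would factor the difference of squares as (sum)$\times$(difference). The sum factor converges to $2\left(\frac{\partial y_{\varepsilon}(g)}{\partial \mathbf{n}}(\mathbf{x}_0) - \alpha\right)$, while the difference factor, divided by $\lambda$, equals $\frac{1}{\lambda}\left(\frac{\partial y_{\varepsilon}(g+\lambda h)}{\partial \mathbf{n}}(\mathbf{x}_0) - \frac{\partial y_{\varepsilon}(g)}{\partial \mathbf{n}}(\mathbf{x}_0)\right)$, the normal $\mathbf{n}$ being common to both terms; by the refined estimate following (\ref{eq:2.88}) this converges to $\frac{\partial u}{\partial \mathbf{n}}(\mathbf{x}_0)$. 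This produces exactly the quantity $L$ of (\ref{eq:2.99}).

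For the second summand I would again factor the difference of squares; the sum factor again tends to $2\left(\frac{\partial y_{\varepsilon}(g)}{\partial \mathbf{n}}(\mathbf{x}_0) - \alpha\right)$, since both normals $\mathbf{n}_{\lambda}$ and $\mathbf{n}$ approach the same limit as $\lambda\to 0$. The difference factor, rewritten through the pointwise formula $\frac{\partial y_{\varepsilon}}{\partial \mathbf{n}} = \frac{\nabla y_{\varepsilon}\cdot\nabla g}{|\nabla g|}$, is precisely (\ref{eq:2.78}); I would split it into a term carrying $\frac{1}{|\nabla(g+\lambda h)|} - \frac{1}{|\nabla g|}$ and a term carrying the extra $\lambda\nabla h$ in the numerator. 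Differentiating $\lambda\mapsto |\nabla g+\lambda\nabla h|^{-1}$ at $\lambda=0$ gives (\ref{eq:2.79}), and using $\nabla y_{\varepsilon}(g)\cdot\nabla g/|\nabla g| = \frac{\partial y_{\varepsilon}(g)}{\partial \mathbf{n}}$ together with $\nabla g\cdot\nabla h/|\nabla g| = \frac{\partial h}{\partial \mathbf{n}}$ assembles the two pieces into $M$. Multiplying by the sum factor yields $2M\left(\frac{\partial y_{\varepsilon}(g)}{\partial \mathbf{n}}(\mathbf{x}_0) - \alpha\right)$, and adding the two summands gives (\ref{eq:2.80}).

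The hard part, on which the whole argument rests, is the strong $\mathcal{C}^1(\overline{D})$ convergence of $\frac{y_{\varepsilon}(g+\lambda h) - y_{\varepsilon}(g)}{\lambda}$ to $u$: only this legitimizes evaluating the limiting normal derivative $\frac{\partial u}{\partial \mathbf{n}}$ pointwise at $\mathbf{x}_0$ and passing to the limit in (\ref{eq:2.99}). It is obtained by dividing the identity (\ref{eq:2.88}) by $\lambda$ and noting that the Lipschitz/smoothing regularity of $\beta_{\varepsilon}$ and $H_{\varepsilon}$ keeps $\Delta\frac{y_{\varepsilon}(g+\lambda h) - y_{\varepsilon}(g)}{\lambda}$ bounded in $H^1(D)$, which (Remark \ref{r1}) yields $H^3(D)$ bounds and hence, by the Sobolev embedding in dimension two, $\mathcal{C}^1(\overline{D})$ compactness; identifying the limit with the solution $u$ of (\ref{3.7}) via Proposition \ref{prop:3.2} closes the argument. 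A secondary technical point is that $\nabla g(\mathbf{x}_0)\neq 0$ by (\ref{eq:1.7}), so the quotients of normals and of $|\nabla(\cdot)|^{-1}$ are well defined and differentiable near $\lambda=0$.
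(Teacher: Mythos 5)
Your proposal is correct and follows essentially the same route as the paper: the same insertion of the intermediate quantity with perturbed state but unperturbed normal, the same difference-of-squares factorizations yielding $L$ via (\ref{eq:2.99}) and $M$ via (\ref{eq:2.78})--(\ref{eq:2.79}), and the same key technical step of deducing strong $\mathcal{C}^1$ convergence of the difference quotient from (\ref{eq:2.88}) and Sobolev embedding. Nothing is missing.
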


A similar calculation can be performed for the pointwise cost functional
(\ref{eq:2.2.2}), term by term. 

\vspace{2mm}
\noindent
Denote by $\mu \in L^2(D)$ the last term in (\ref{3.7}). In fact, at least $H^1_0(D)$ regularity
is valid for $\mu$, under our assumptions, and  $u \in H^3(D)$ depends linearly and continuously
on $\mu$ in these spaces. Then, $\frac{\partial  u}{\partial \mathbf{n}}$ and the first term in
(\ref{eq:2.80}) depend linearly and continuously on $\mu \in H^1_0(D)$.
There is a unique $p \in H^{-1}(D)$
such that the linear continuous functional on the left side below, can be expressed as:
\begin{eqnarray}
2\left(\frac{\partial y_{\varepsilon}(g)}{\partial \mathbf{n}}( \mathbf{x}_0) - \alpha\right)
\frac{\partial  u}{\partial \mathbf{n}}( \mathbf{x}_0) = <p,\mu>
= <p,\dfrac{1}{\varepsilon}H^\prime_{\varepsilon}(g)y_{\varepsilon}h>,
\label{eq:2.81}
\end{eqnarray}
where $<\cdot , \cdot>$ denotes the pairing in $ H^{-1}(D)  \times  H^1_0(D)$. We define the
adjoint equation by rewriting (\ref{eq:2.81}) in the form:
\begin{eqnarray}
2\left(\frac{\partial y_{\varepsilon}(g)}{\partial \mathbf{n}}( \mathbf{x}_0) - \alpha\right)
\frac{\partial  u}{\partial \mathbf{n}}( \mathbf{x}_0)
= - <p,-\Delta u + \beta^\prime_{\varepsilon}(y_{\varepsilon})u + \dfrac{1}{\varepsilon}H_{\varepsilon}(g)u>,
\label{eq:2.82}
\end{eqnarray}
where $u \in H^3(D) \cap H^1_0(D)$ has to be considered an arbitrary test function. This approach,
called the transposition method, is due to J.L. Lions, see \cite{JLL}, Ch.2.5.4, where a related
cost functional is considered. Moreover, one adds the null Dirichlet condition to  (\ref{eq:2.82}),
due to the employed class of test functions and a formal interpretation of equation (\ref{eq:2.82}) is:
\begin{eqnarray}
  -\Delta p + \beta^\prime_{\varepsilon}(y_{\varepsilon})p + \dfrac{1}{\varepsilon}H_{\varepsilon}(g)p
  = -P(y_{\varepsilon}) \delta_{\mathbf{x}_0},
\label{adj}
\end{eqnarray}
in $D$, where $\delta_{\mathbf{x}_0}$ is the Dirac distribution concentrated in $\mathbf{x}_0$ and
$P(y_{\varepsilon})$ is generated starting from the linear form in $u$:
\begin{eqnarray}
  2 \int_D \left(\frac{\nabla y_{\varepsilon} \cdot \nabla g}{|\nabla g|} - \alpha\right)
  \frac{\nabla u \cdot \nabla g}{|\nabla g|}dx.
\end{eqnarray}

We have proved:

\begin{proposition}\label{p2.6}
  The adjoint equation associated to the cost functional (\ref{eq:2.2.1}) and the state system
  (\ref{eq:2.4}), (\ref{eq:2.5}) is given by (\ref{eq:2.82}) and it has a unique transposition
  solution $p \in H^{-1}(D)$.
\end{proposition}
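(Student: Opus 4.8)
The plan is to establish Proposition \ref{p2.6} by recognizing that the content is essentially a Lax–Milgram / Riesz-type argument applied to the transposition (duality) formulation already set up in equations (\ref{eq:2.81})–(\ref{eq:2.82}). The key observation is that the map $u \mapsto -\Delta u + \beta^\prime_{\varepsilon}(y_{\varepsilon})u + \tfrac{1}{\varepsilon}H_{\varepsilon}(g)u$ appearing on the right of (\ref{eq:2.82}) is the state operator in variations from (\ref{3.7}); call it $\mathcal{A}_\varepsilon$. First I would verify that $\mathcal{A}_\varepsilon : H^2(D) \cap H^1_0(D) \to L^2(D)$ is an isomorphism. This follows because $\beta^\prime_{\varepsilon}, H^\prime_{\varepsilon} \geq 0$ (monotonicity of the Yosida approximations, preserved by the symmetric nonnegative mollification in (\ref{eq:2.6})) make the zeroth-order coefficients nonnegative, so the bilinear form $a(u,v) = \int_D \nabla u \cdot \nabla v + \beta^\prime_{\varepsilon}(y_{\varepsilon})uv + \tfrac{1}{\varepsilon}H_{\varepsilon}(g)uv\,dx$ is coercive and bounded on $H^1_0(D)$; elliptic regularity on the $\mathcal{C}^{1,1}$ domain $D$ lifts the unique weak solution to $H^2(D)$, exactly as was used to obtain $y_\varepsilon \in H^2(D)$ after (\ref{eq:2.5}).

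Next I would exhibit the functional to be represented. By Remark \ref{r1} and the paragraph preceding (\ref{eq:2.81}), the quantity $u \mapsto 2\bigl(\tfrac{\partial y_{\varepsilon}(g)}{\partial \mathbf{n}}(\mathbf{x}_0) - \alpha\bigr)\tfrac{\partial u}{\partial \mathbf{n}}(\mathbf{x}_0)$ is well defined because $u \in H^3(D) \subset \mathcal{C}^1(\overline{D})$, and it depends linearly and continuously on $u$. I would define $\Phi(u)$ to be this pointwise expression and observe it is a bounded linear functional. Composing with the inverse isomorphism $\mathcal{A}_\varepsilon^{-1}$, the functional $\Phi \circ \mathcal{A}_\varepsilon^{-1}$ acts on $L^2(D)$ (or, using the sharper regularity noted for $\mu \in H^1_0(D)$, on that space). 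The transposition solution $p$ is then \emph{defined} to be the Riesz representative, i.e. the unique element of $H^{-1}(D)$ satisfying (\ref{eq:2.82}) for every test function $u \in H^2(D) \cap H^1_0(D)$; uniqueness is immediate because $\mathcal{A}_\varepsilon$ is onto, so the defining identity pins down the pairing $\langle p, \cdot\rangle$ on all of the range.

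The genuinely delicate step — the one I expect to be the main obstacle — is the boundedness of $\Phi$ in the \emph{correct} topology so that $p$ lands in $H^{-1}(D)$ rather than merely in the dual of a smaller, smoother space. The functional $\Phi$ involves a normal derivative evaluated at the single point $\mathbf{x}_0$, which is a composition of a trace operation with a Dirac evaluation, and such a functional is \emph{not} bounded on $H^1_0(D)$ in two dimensions. The resolution, consistent with the formal right-hand side $-P(y_\varepsilon)\delta_{\mathbf{x}_0}$ in (\ref{adj}), is that the estimate must be routed through the extra regularity $u \in H^3(D)$ guaranteed by Remark \ref{r1}; one shows $|\Phi(u)| \leq C\|\mathcal{A}_\varepsilon u\|_{H^1_0(D)}$ by first writing $\Phi(u) = \langle p, \mu\rangle$ with $\mu = \mathcal{A}_\varepsilon u$ and invoking the stated linear continuous dependence of $\tfrac{\partial u}{\partial \mathbf{n}}(\mathbf{x}_0)$ on $\mu \in H^1_0(D)$. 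Thus the duality $\langle \cdot,\cdot\rangle$ is taken in $H^{-1}(D) \times H^1_0(D)$ precisely to absorb this loss of regularity.

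Finally I would assemble the pieces: the isomorphism property of $\mathcal{A}_\varepsilon$, the continuity of $\Phi$ on the range of $\mathcal{A}_\varepsilon$ (equivalently on $H^1_0(D)$ after substitution $\mu = \mathcal{A}_\varepsilon u$), and the Riesz representation theorem together yield existence and uniqueness of $p \in H^{-1}(D)$ solving (\ref{eq:2.82}) in the transposition sense. Since (\ref{eq:2.82}) is literally the defining relation and the whole chain (\ref{eq:2.81})–(\ref{adj}) was derived before the statement, the proof is the formal verification that these duality computations are legitimate; I would close by remarking that (\ref{adj}) is the distributional reading of this $p$ and requires no independent justification.
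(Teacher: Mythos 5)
Your proposal is correct and follows essentially the same route as the paper: the transposition method, in which the continuity of the map $\mu \mapsto u \mapsto \frac{\partial u}{\partial \mathbf{n}}(\mathbf{x}_0)$ from $H^1_0(D)$ into $\mathbb{R}$ (obtained by lifting $u$ to $H^3(D)\subset \mathcal{C}^1(\overline{D})$ via elliptic regularity, as in Remark \ref{r1}) yields a unique representing element $p \in H^{-1}(D)$ in the duality $H^{-1}(D)\times H^1_0(D)$. Your explicit identification of the delicate point --- that the pointwise normal derivative is not bounded on $H^1_0(D)$ alone and must be routed through the regularity gain $\mu \in H^1_0(D) \Rightarrow u \in H^3(D)$ --- is exactly the mechanism the paper relies on in the paragraph preceding (\ref{eq:2.81}).
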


In the next section, we shall employ a direct approach based on Prop.\ref{p5}, in the finite
dimensional optimization problem obtained via discretization. Due to the very weak regularity
properties of the adjoint state discussed in Prop.\ref{p2.6}, the usual method to compute the
gradient of the cost via the adjoint system, seems difficult to apply
(but we indicate a regularized variant in one example).

\section{Discretization and numerical tests}

We assume $D\subset \mathbb{R}^2$ to be polygonal domain and $\mathcal{T}_h$ a mesh in $D$
of size $h>0$, each element $T$ of the mesh is a triangle.

We set the $\mathbb{P}_1$ Lagrangian finite element spaces
\begin{eqnarray*}
W_h &=& \{ v_h\in \mathcal{C}(\overline{D});\ \forall T\in \mathcal{T}_h,
\ (v_h)_{|T}\in\mathbb{P}_1(T) \} \\
V_h &=& \{ v_h\in W_h;\ v_h=0\ \mathrm{on}\ \partial D\} .
\end{eqnarray*}

Let $\varphi_h\in W_h$ be an approximation of the obstacle $\varphi$.  We set
\begin{equation}\label{ch3_2:Kh}
  \mathcal{K}_h= \{ v_h \in V_h;\ v_h \geq \varphi_h\ \mathrm{in}\ D \}.
\end{equation}

The discretized state equation, associated to (\ref{eq:2.4}),
  (\ref{eq:2.5}), in variational formulation,  is: find 
$y_h\in \mathcal{K}_h$ such that
\begin{eqnarray}
&&  \int_D \nabla y_h\cdot \nabla (y_h-v_h)\,d\mathbf{x}
  + \frac{1}{\varepsilon} \int_D H_\eta(g_h)y_h(y_h-v_h)\,d\mathbf{x}\nonumber\\
&\leq& \int_D f(y_h-v_h)\,d\mathbf{x}, \quad \forall v_h\in \mathcal{K}_h  \label{ch3_2:VIh}
\end{eqnarray}
with $f\in L^2(D)$.
Here $H_{\eta}\in\mathcal{C}^1(\mathbb{R})$, $\eta>0$, is a regularization of the
Heaviside function, given by
\begin{equation}\label{ch3_2:Hreg}
H_{\eta}(r)=\left\{ 
\begin{array}{cl}
  1, & r> \eta\\
  \frac{(-2r+3\eta)r^2}{\eta^3}, & r\in [0,\eta]\\
  0, & r <0.
\end{array}
\right.
\end{equation}

For the following example of maximal monotone graph
$\beta\subset \mathbb{R} \times \mathbb{R}$, 
\begin{equation}\label{ch3_2:beta}
\beta(r)=\left\{ 
\begin{array}{cl}
	0,           & r>0\\
	]-\infty,0], & r=0\\
	\emptyset  , & r <0
\end{array}
\right.
\end{equation}
we introduce the approximation
$\beta_{\eta,\varepsilon_2}\in\mathcal{C}^1(\mathbb{R})$, ($\varepsilon_2>0$)  given by
\begin{equation}\label{ch3_2:betareg}
\beta_{\eta,\varepsilon_2}(r)=
\left\{ 
\begin{array}{cl}
0,           & r>0\\
r^2\left(\frac{-r}{(\eta)^2\varepsilon_2}
-\frac{2}{\eta\varepsilon_2}\right), &
r\in [-\eta,0]\\
\frac{r}{\varepsilon_2} , & r <-\eta .
\end{array}
\right.
\end{equation}
We take $\eta > \varepsilon$ and $\varepsilon_2 > \varepsilon$
to avoid some numerical difficulties, see \cite{SEMA_2024}.

Each $g_h\in W_h$ can be written as $g_h=\sum_{i=1}^n G_i\phi_i$ where
$(\phi_i)_{1\leq i\leq n}$ is the basis of the hat functions and $n=\mathrm{dim}(W_h)$.
We introduce $\mathcal{J}_1:\mathbb{R}^n \rightarrow \mathbb{R}$ defined by
$$
\mathcal{J}_1(G)=J_1(g_h)
$$
where $G=(G_i)_{1\leq i \leq n}$, see(\ref{eq:2.2.1}).

We denote $I=\{1,\dots,n\}$ and let $I_0\subset I$ be the indices of the vertices
$A_i$ of $\mathcal{T}_h$
such that $\|\mathbf{x}_0-A_i \| < hC$ with $C\geq 2$.
As an example, for the triangulation and $\mathbf{x}_0$ of Test 1, we get $I_0$ of 13 elements
for $C=2$ and 33 elements for $C=3$.

Another choice for $I_0$ is the set of indices of the vertices of the triangle $T\in \mathcal{T}_h$
containing $\mathbf{x}_0$. In this case, the parameter $C$ is not needed.

If $i\in I\setminus I_0$,
then $\mathbf{x}_0$ doesn't belong to the support of $\phi_i$ and
\begin{equation}\label{cond_phij}
  \forall i\in I\setminus I_0,\ \phi_i(\mathbf{x}_0)=0.
\end{equation}

Let $g_h^0\in W_h$ be a finite element approximation of the initial guess $g^0$ in the subsequent
algorithm, such that $g_h^0(\mathbf{x}_0)=0$ and $g_h^0=\sum_{i=1}^n G_i^0\phi_i$.
For $g_h=\sum_{i\in I_0}G_i^0\phi_i + \sum_{i\in I\setminus I_0} R_i\phi_i$,
$R_i\in \mathbb{R}$, we get
$$
\forall R_i\in \mathbb{R},\ i\in I\setminus I_0,\ 
g_h(\mathbf{x}_0)=\sum_{i\in I_0}G_i^0\phi_i(\mathbf{x}_0) =g_h^0(\mathbf{x}_0)=0.
$$
In other words, we can choose freely $R_i$, for all $i\in I\setminus I_0$.

For $i\in I\setminus I_0$, we set the elliptic problem:
find $u_i\in V_h$ such that
\begin{eqnarray}
  &&  \int_D \nabla u_i\cdot \nabla v_h\,d\mathbf{x}
  + \int_D \beta_{\eta,\varepsilon_2}^\prime(y_h-\varphi_h)u_i v_h\,d\mathbf{x}
  + \frac{1}{\varepsilon} \int_D H_\eta(g_h)u_i v_h\,d\mathbf{x}
  \nonumber\\
  &=& -\frac{1}{\varepsilon}\int_D H_\eta^\prime(g_h)y_h \phi_i v_h\,d\mathbf{x},
  \quad \forall v_h\in V_h . \label{u_i}
\end{eqnarray}
For each $i\in I\setminus I_0$, the matrix of the linear system (\ref{u_i}) is the same,
just the right-hand side is different. We can solve effectively by factorization of the matrix,
see \cite{ciarlet1989}, Chap. 4.
We point out that $u_i\in V_h$, so the linear system has $\dim (V_h)$ equations.

\begin{corollary}\label{cor:1}
Let $g_h$ in $W_h$ and we assume that $g_h(\mathbf{x}_0)=0$. Then
\begin{eqnarray}
&&  \partial_i \mathcal{J}_1(G) = \nonumber \\
&& 2\left(\frac{\partial y_h}{\partial \mathbf{n}}( \mathbf{x}_0) - \alpha\right)
\left[\frac{\partial  u_i}{\partial \mathbf{n}}
- \frac{\partial y_h}{\partial \mathbf{n}}
  \frac{\partial \phi_i}{\partial \mathbf{n}}\frac{1}{|\nabla g_h|}
+ \frac{\nabla y_h \cdot \nabla \phi_i}{|\nabla g_h|}\right]( \mathbf{x}_0).
\label{diJ1}
\end{eqnarray}
\end{corollary}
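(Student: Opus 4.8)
The plan is to read the statement as the finite-dimensional, discretized instance of Proposition~\ref{p5}, obtained by taking the functional variation to be a single hat function. Since $g_h=\sum_{j=1}^n G_j\phi_j$, we have $\partial g_h/\partial G_i=\phi_i$, so that
$$
\partial_i\mathcal{J}_1(G)=\left.\frac{d}{d\lambda}\right|_{\lambda=0}J_1(g_h+\lambda\phi_i),
$$
i.e. the $i$-th partial derivative of $\mathcal{J}_1$ is exactly the directional derivative of $J_1$ at $g_h$ in the direction $h=\phi_i$. I would therefore aim to apply the formula \eqref{eq:2.80} of Proposition~\ref{p5} with $g=g_h$, the continuous state $y_\varepsilon(g)$ replaced by the discrete state $y_h$ and the variation $u$ replaced by $u_i$. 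The hypotheses to check are the two pointwise constraints: the condition $g(\mathbf{x}_0)=0$ is the standing assumption $g_h(\mathbf{x}_0)=0$, while the condition $h(\mathbf{x}_0)=0$ is precisely \eqref{cond_phij}, valid because $i\in I\setminus I_0$ means $\mathbf{x}_0$ lies outside $\mathrm{supp}\,\phi_i$. This is exactly why the restriction $i\in I\setminus I_0$ is imposed: it keeps $\mathbf{x}_0$ on $\partial\Omega_{g_h+\lambda\phi_i}$ for all $\lambda$, the discrete counterpart of $\mathbf{x}_0\in\partial\Omega_\lambda$ used throughout Section~2.

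The key identification is that $u_i$ is the discrete variation $\partial_{G_i}y_h$. I would differentiate the (penalized, smooth) discrete state equation with respect to $G_i$: since $\partial_{G_i}g_h=\phi_i$ and $\partial_{G_i}y_h=:u_i$, the chain rule applied to the terms $\beta_{\eta,\varepsilon_2}(y_h-\varphi_h)$ and $\tfrac1\varepsilon H_\eta(g_h)y_h$ reproduces \eqref{u_i} verbatim, with the term $\tfrac1\varepsilon H_\eta'(g_h)y_h\phi_i$ moved to the right-hand side. Thus $u_i$ is the discrete analogue of the solution $u$ of the system in variations \eqref{3.7}, and substituting $h=\phi_i$, $u=u_i$, $y_\varepsilon(g)=y_h$, $g=g_h$ into \eqref{eq:2.80} gives \eqref{diJ1} term by term.

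Rather than quoting Proposition~\ref{p5} literally — whose $\mathcal{C}^2$ regularity hypotheses on $h$ are not met by the Lipschitz hat functions $\phi_i$ — it is cleaner and fully rigorous to differentiate directly, which is the route I would actually carry out. Write $\mathcal{J}_1(G)=\bigl|\tfrac{\partial y_h}{\partial\mathbf{n}}(\mathbf{x}_0)-\alpha\bigr|^2$ with $\tfrac{\partial y_h}{\partial\mathbf{n}}(\mathbf{x}_0)=\tfrac{\nabla y_h\cdot\nabla g_h}{|\nabla g_h|}(\mathbf{x}_0)$; provided $\nabla g_h(\mathbf{x}_0)\neq0$ (the discrete form of \eqref{eq:1.7}) this is a smooth function of $G$, and the chain rule gives $\partial_i\mathcal{J}_1=2\bigl(\tfrac{\partial y_h}{\partial\mathbf{n}}(\mathbf{x}_0)-\alpha\bigr)\,\partial_{G_i}\bigl[\tfrac{\nabla y_h\cdot\nabla g_h}{|\nabla g_h|}(\mathbf{x}_0)\bigr]$. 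Applying the product and quotient rules with $\partial_{G_i}\nabla y_h=\nabla u_i$, $\partial_{G_i}\nabla g_h=\nabla\phi_i$, and $\partial_{G_i}\bigl(1/|\nabla g_h|\bigr)=-\tfrac{\partial\phi_i}{\partial\mathbf{n}}/|\nabla g_h|^2$ (the discrete counterpart of \eqref{eq:2.79}), the three contributions collapse to $\tfrac{\partial u_i}{\partial\mathbf{n}}$, $-\tfrac{\partial y_h}{\partial\mathbf{n}}\tfrac{\partial\phi_i}{\partial\mathbf{n}}\tfrac1{|\nabla g_h|}$ and $\tfrac{\nabla y_h\cdot\nabla\phi_i}{|\nabla g_h|}$ at $\mathbf{x}_0$, which is exactly the bracket in \eqref{diJ1}.

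The main obstacle is justifying differentiation through the discrete state. The state \eqref{ch3_2:VIh} is genuinely a variational inequality and hence only non-smoothly dependent on the data in general, so one must argue within the penalized formulation: because $\beta_{\eta,\varepsilon_2}$ and $H_\eta$ are $\mathcal{C}^1$ with $\beta'_{\eta,\varepsilon_2}\geq0$ and $H_\eta\geq0$, the tangent system of the discrete equation (the bilinear form on the left of \eqref{u_i}) is coercive and its matrix nonsingular, so the implicit function theorem yields $G\mapsto y_h(G)$ of class $\mathcal{C}^1$ and validates $u_i=\partial_{G_i}y_h$. A secondary technical point is the pointwise meaning of the $\mathbb{P}_1$ gradients at $\mathbf{x}_0$: since $\nabla y_h,\nabla g_h$ are piecewise constant, the quotients are evaluated on the triangle containing $\mathbf{x}_0$, which requires $\mathbf{x}_0$ to lie in its interior with $\nabla g_h(\mathbf{x}_0)\neq0$; these are exactly the discrete transcriptions of the regularity assumed for Remark~\ref{r1} and \eqref{eq:1.7}.
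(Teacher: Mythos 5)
Your proposal is correct and follows essentially the same route as the paper: the paper's proof simply observes that $\partial_i\mathcal{J}_1(G)=J_1'(g_h)\phi_i$ and substitutes $y_h$, $u_i$, $\phi_i$ for $y_\varepsilon$, $u$, $h$ in formula (\ref{eq:2.80}) of Proposition \ref{p5}, with the normal derivatives interpreted via $\nabla g_h(\mathbf{x}_0)/|\nabla g_h(\mathbf{x}_0)|$, exactly as you do. Your additional remarks (the role of $i\in I\setminus I_0$ in ensuring $\phi_i(\mathbf{x}_0)=0$, the identification of $u_i$ with $\partial_{G_i}y_h$ via the smoothed discrete equation, and the direct chain-rule rederivation that avoids the $\mathcal{C}^2$ mismatch for hat functions) are sound justifications that the paper leaves implicit rather than a different method.
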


\begin{proof}
For given $g_h$, we set $y_h$ the solution of (\ref{ch3_2:VIh}).
The partial derivative $\partial_i \mathcal{J}_1(G)$ is in fact $J_1^\prime(g_h)\phi_i$.
The last one
can be computed using (\ref{eq:2.80}), where $y_\varepsilon$, $u$, $h$ are replaced with
$y_h$, $u_i$, $\phi_i$ respectively. Here
$\frac{\partial y_h}{\partial \mathbf{n}}(\mathbf{x}_0)$,
$\frac{\partial u_i}{\partial \mathbf{n}}(\mathbf{x}_0)$
$\frac{\partial \phi_i}{\partial \mathbf{n}}(\mathbf{x}_0)$
mean
$$
\nabla y_h(\mathbf{x}_0) \cdot  \frac{\nabla g_h (\mathbf{x}_0)}{|\nabla g_h(\mathbf{x}_0)|},
\quad
\nabla u_i(\mathbf{x}_0) \cdot  \frac{\nabla g_h (\mathbf{x}_0)}{|\nabla g_h(\mathbf{x}_0)|},
\quad
\nabla \phi_i(\mathbf{x}_0) \cdot  \frac{\nabla g_h (\mathbf{x}_0)}{|\nabla g_h(\mathbf{x}_0)|}
$$
respectively.
\end{proof}

The optimization problem to be solved is
\begin{equation}\label{discret_J1}
  \min_{R_i\in \mathbb{R}} \mathbb{J}_1\left((R_i)_{i\in I\setminus I_0}\right)
  =\mathcal{J}_1\left( (G_i^0)_{i\in I_0} , (R_i)_{i\in I\setminus I_0}\right)
\end{equation}
and we have for all $i\in I\setminus I_0$
$$
\partial_i \mathbb{J}_1\left((R_i)_{i\in I\setminus I_0}\right)
=\partial_i \mathcal{J}_1\left( (G_i^0)_{i\in I_0} , (R_i)_{i\in I\setminus I_0}\right).
$$

The steepest descent direction for (\ref{discret_J1}) is given by
\begin{equation}\label{descent_dir_J1}
-\nabla \mathbb{J}_1\left((R_i)_{i\in I\setminus I_0}\right)
\end{equation}
and we can write the \textbf{Algorithm}:

\bigskip

\textbf{Step 1.} Let $g_h^0=\sum_{i=1}^n G_i^0\phi_i$ be the initial guess, such that
$g_h^0(\mathbf{x}_0)=0$. Set $R_i^0=G_i^0$ for $i\in I\setminus I_0$ and put $k=0$.

\textbf{Step 2.} For $i\in I\setminus I_0$, compute $u_i$ using (\ref{u_i}), then compute
$\nabla \mathbb{J}_1(R^k)$ using (\ref{diJ1}), where $R^k=(R_i^k)_{i\in I\setminus I_0}$.

\textbf{Step 3.} Find $\lambda_k >0$ by line search
$$
\lambda_k \in \arg\min_{\lambda >0}
\mathbb{J}_1\left( R^k-\lambda \nabla \mathbb{J}_1(R^k) \right)
$$

\textbf{Step 4.} Update
$$
R^{k+1}=R^k - \lambda_k \nabla \mathbb{J}_1(R^k)
$$

\textbf{Step 5.} We stop the algorithm when
$| \mathbb{J}_1(R^{k+1}) - \mathbb{J}_1(R^k)| < tol$. If not, go to \textbf{Step 2.}

\bigskip

We notice that the values at the vertices of index $i\in I_0$ of the
parameterization function
$g_h^k=\sum_{i\in I_0}G_i^0\phi_i + \sum_{i\in I\setminus I_0} R_i^k\phi_i$ are fixed,
in order to satisfy the condition $g_h^k(\mathbf{x}_0)=0$, for all $k\geq 0$.
In fact, $g_h^k$ is fixed in a neighborhood of $\mathbf{x}_0$.

\bigskip

\textbf{Test 1.}

The domain is $D=]0,1[\times ]0,1[$,
the obstacle is $\varphi:D\rightarrow\mathbb{R}$, $\varphi=-0.5$, 
$f:D\rightarrow\mathbb{R}$, $f=-100$ and $\alpha=0$.
We use a mesh for $D$ of 26870 vertices, 53138 triangles and size $h=1/150$.    
The following values have been used: penalization parameter $\varepsilon=0.0001$,
parameters for smoothed Heaviside and $\beta$ are $\eta=0.05$, $\varepsilon_2=0.01$.
The starting domain is a disk of radius $r_0=0.25$ and center $(0.5, 0.5)$,
while $\mathbf{x}_0=(0.25,0.5)$. 
For the stopping criterion, we use $tol=10^{-6}$.
For the cases a), b), c) we use the descent direction $-\nabla \mathbb{J}_1$
and for the case d) we use the simplified (partial) direction $-y_hp_h$.

\medskip
\textbf{Case a). We use $\alpha=0$ and $C=2$;  $I_0$ has 13 vertices.}

The steepest descent direction algorithm stops after 5 iterations and
the objective functions decreases from $36.82$ to $2.80\times 10^{-7}$,
see Figure \ref{fig:k2_kf150_J1}.
The evolution of $\Omega$ is prezented in Figure \ref{fig:k2_kf150_Omega}.
The boundary of the final $\Omega$ is plotted in  Figure \ref{fig:k2_kf150_iso}.

\begin{figure}[ht]
\centering
  \includegraphics[width=6cm]{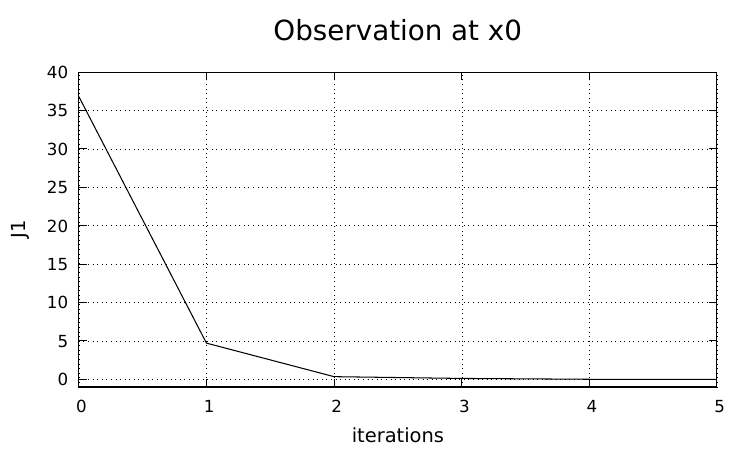}
  \caption{Test 1, case a). The history of the objective function $J_1$.}
\label{fig:k2_kf150_J1}
\end{figure}

\begin{figure}[ht]
\centering
\includegraphics[width=6cm]{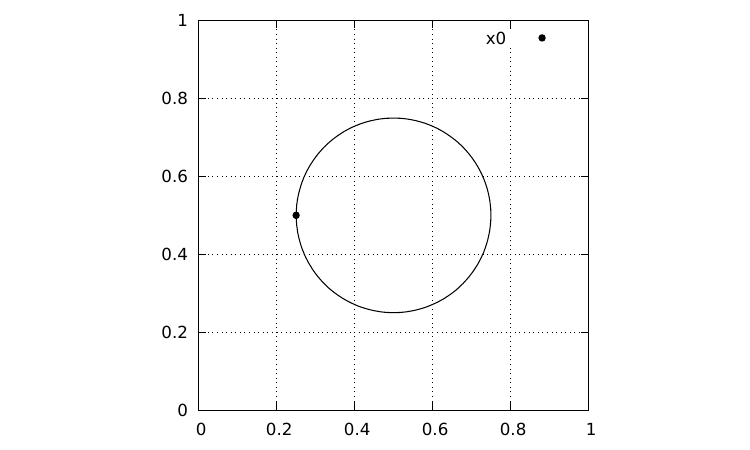}
\includegraphics[width=6cm]{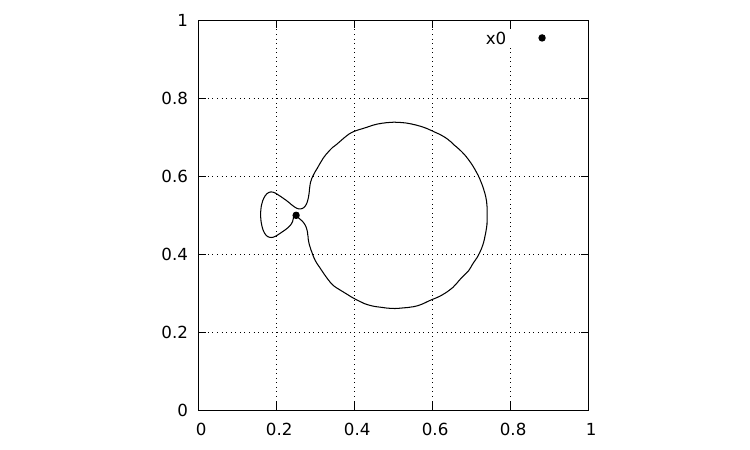}
\caption{Test 1, case a). The position of $\mathbf{x}_0$ and the
  boundaries of $\Omega$: initial (left)
    and final (right).}
\label{fig:k2_kf150_iso}
\end{figure}
\newpage

\begin{figure}[ht]
\centering
  \includegraphics[width=6cm]{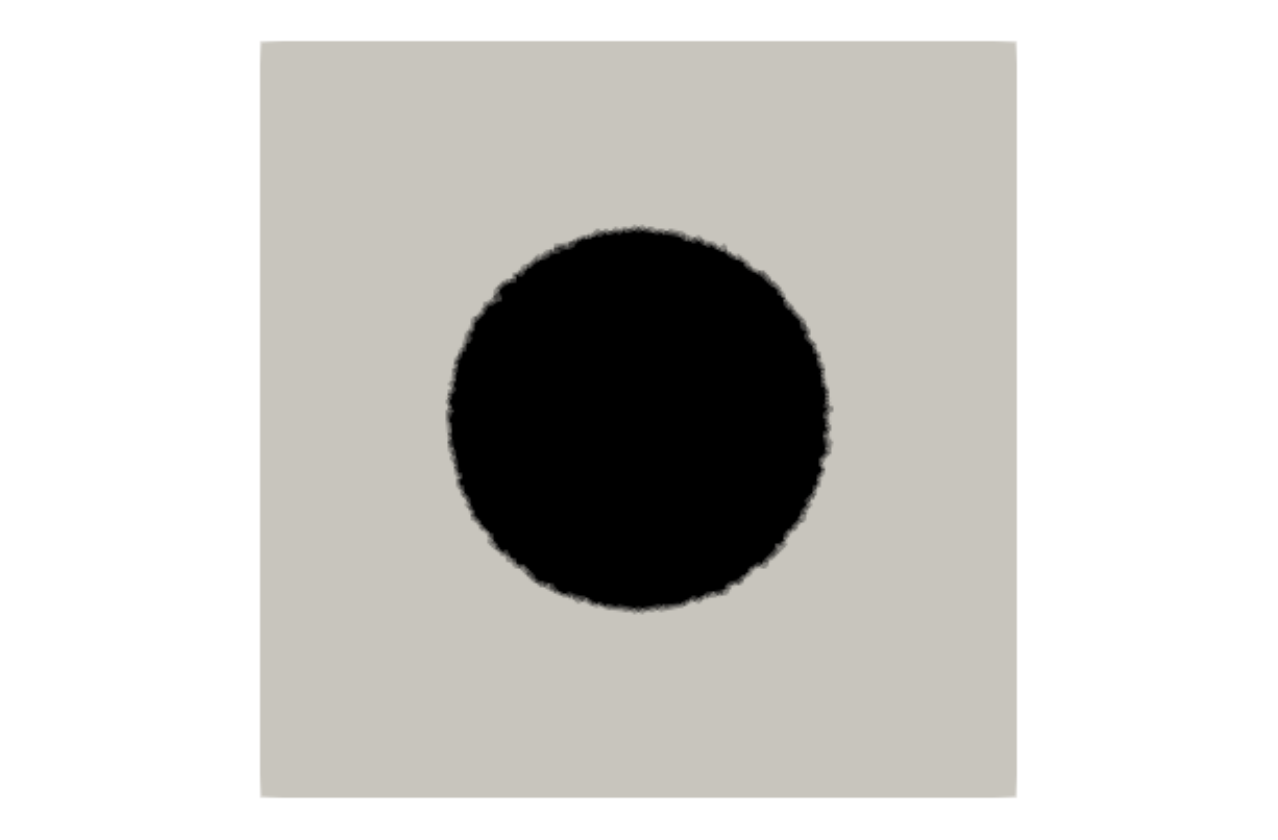}\quad
  \includegraphics[width=6cm]{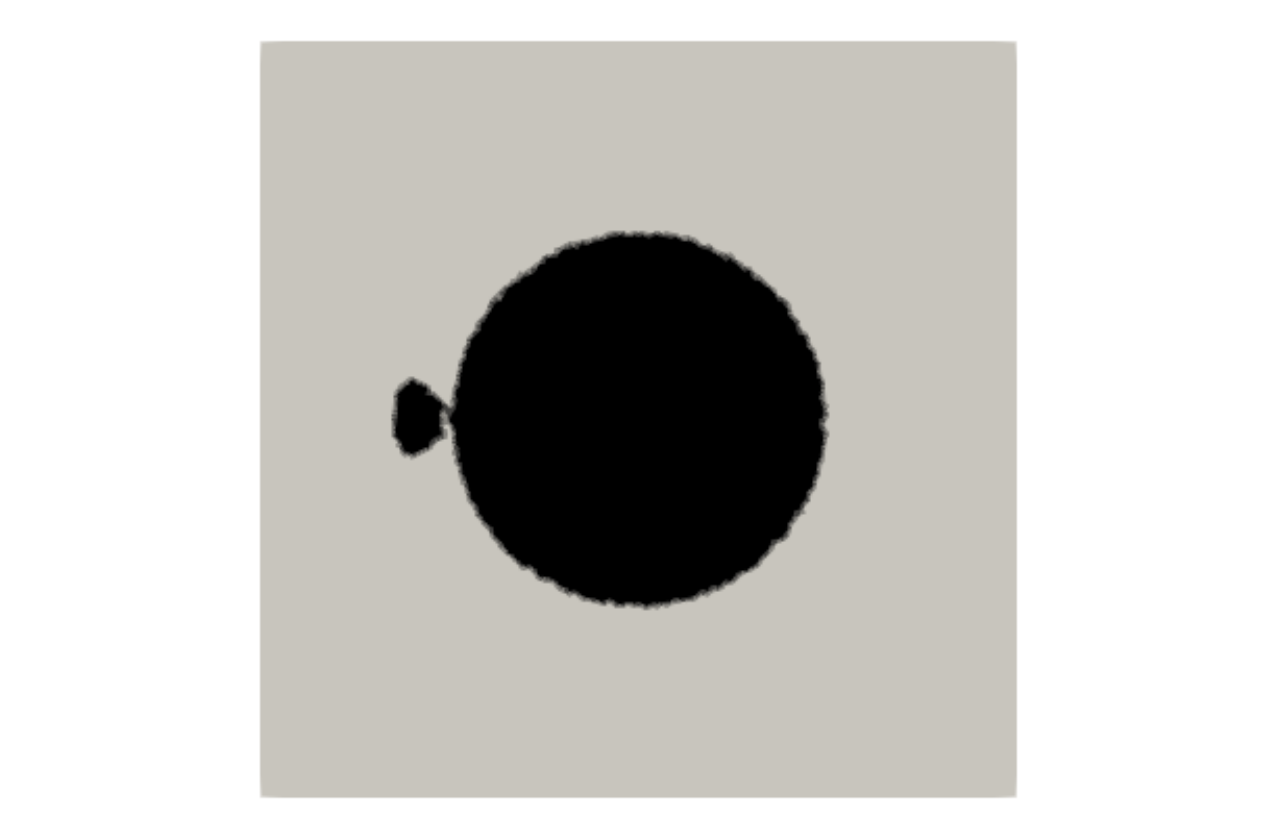}\\
  \includegraphics[width=6cm]{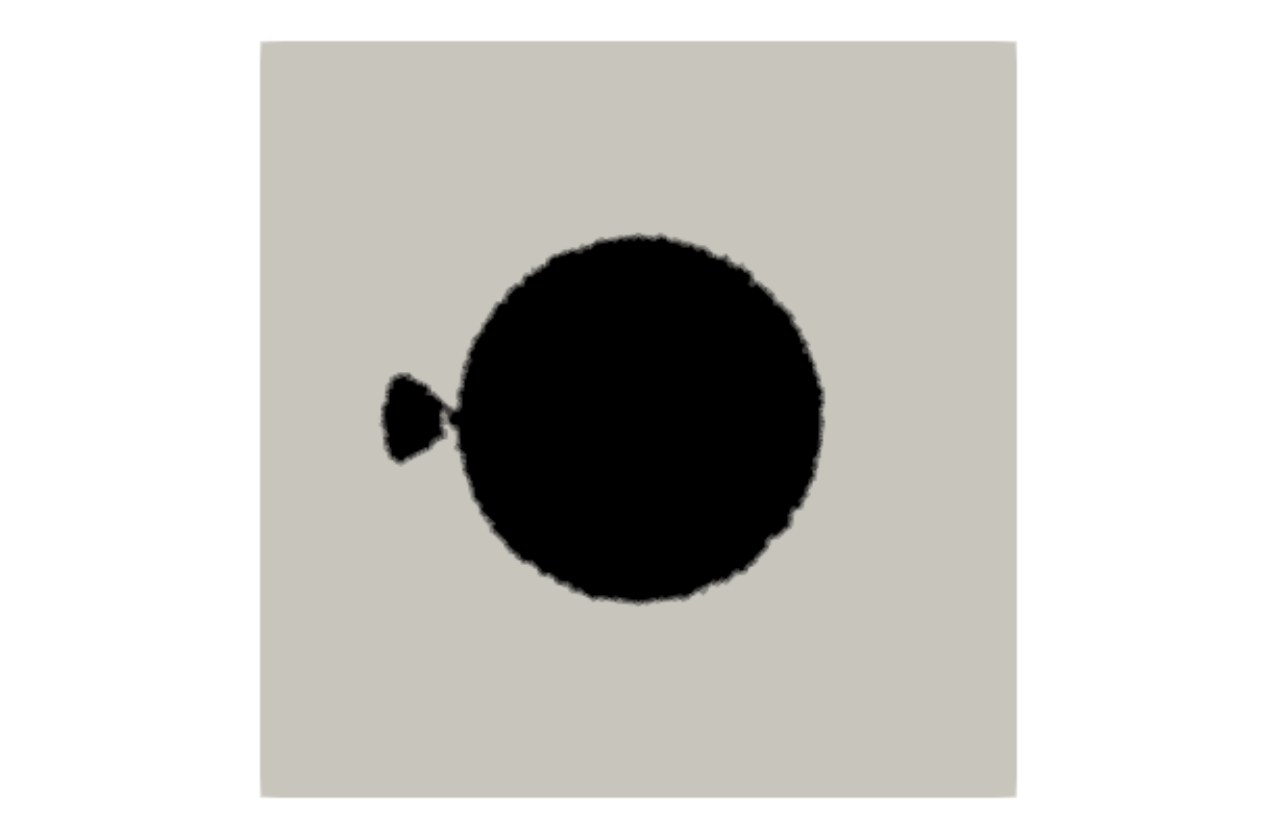}\quad
  \includegraphics[width=6cm]{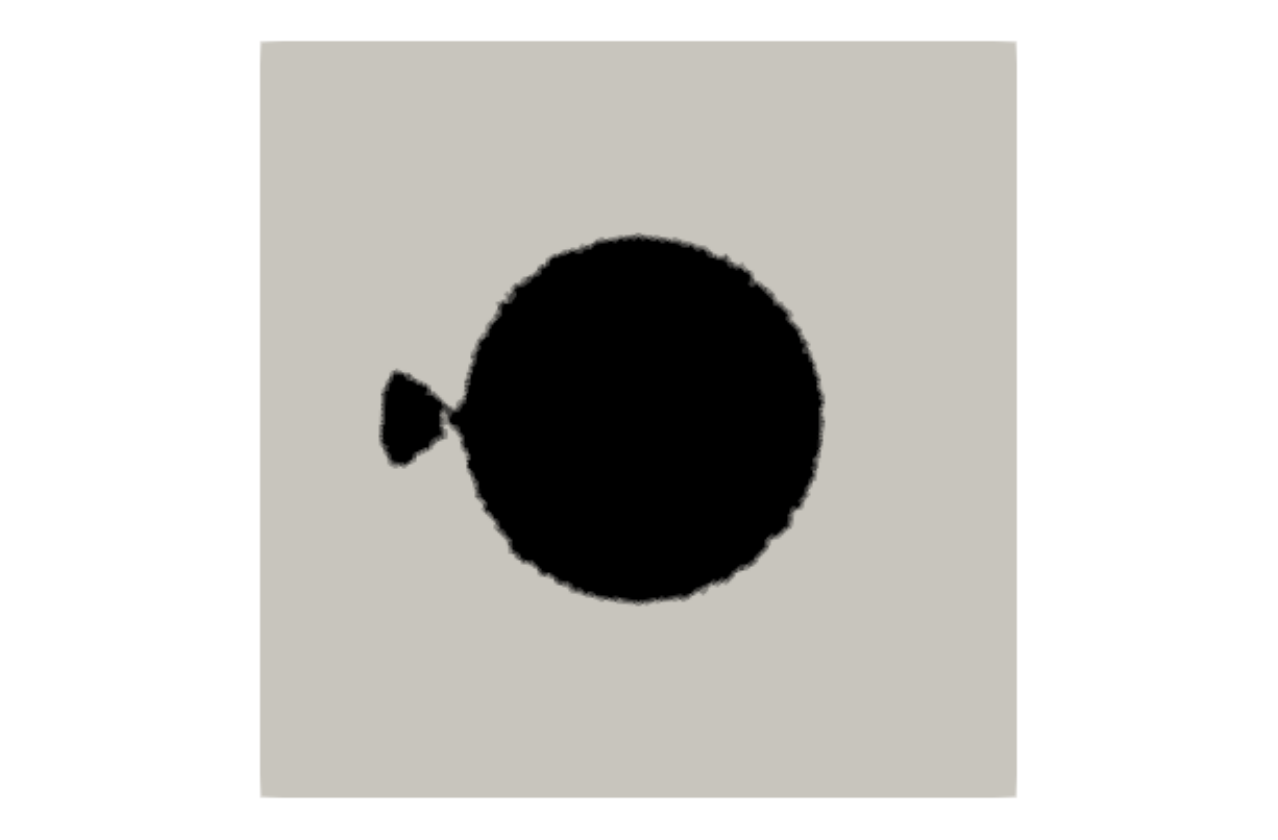}\\
  \includegraphics[width=6cm]{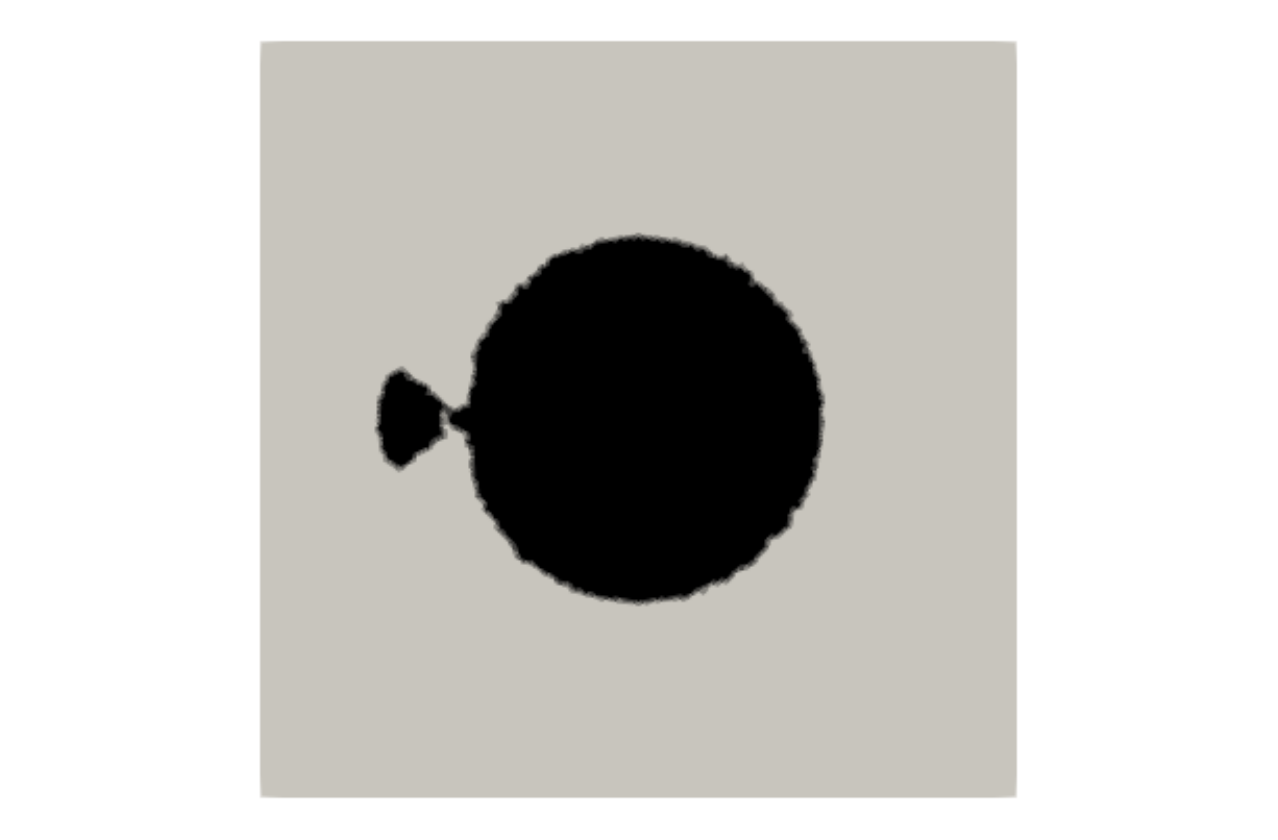}\quad
  \includegraphics[width=6cm]{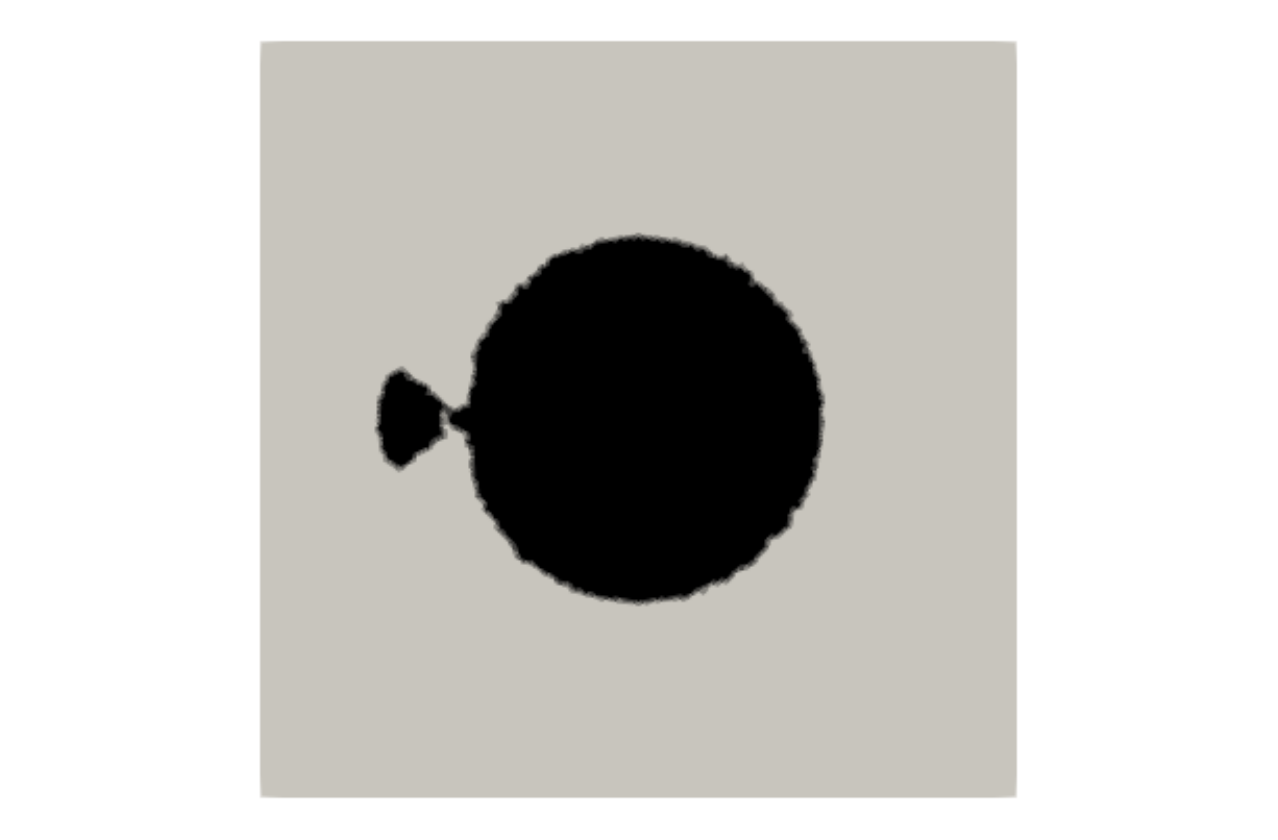}
  \caption{Test 1, case a). $\Omega$ at iterations: 0, 1, 2, 3, 4, 5
      (from left to right and from top to the bottom).
}
\label{fig:k2_kf150_Omega}
\end{figure}

In Figure \ref{fig:k2_kf150_yh}, we show the final state solution $y_h$.
At the final iteration,
we obtained 
$$
\nabla y_h(\mathbf{x}_0)=(-0.001926, -0.367164),\quad
\frac{\nabla g_h (\mathbf{x}_0)}{|\nabla g_h(\mathbf{x}_0)|}=
(-0.999978, 0.006689)
$$
then $\frac{\partial y_h}{\partial \mathbf{n}}(\mathbf{x}_0)-\alpha=-0.000529
= \frac{\partial y_h}{\partial \mathbf{n}}(\mathbf{x}_0)$.

In Figure \ref{fig:k2_kf150_yh}, we see that the obtained optimal geometry ``generates'' a saddle point
for the optimal $y_h$ and $\mathbf{x}_0$ is close to this saddle point. That's why the final cost has
such a low value. Notice however that the norm of $\nabla y_h(\mathbf{x}_0) \neq 0$, that is the
tangential derivative of the optimal state in $\mathbf{x}_0$ is not zero and the Dirichlet
boundary condition for $y_h$ is slightly violated due to the numerical errors
(but the numerical minimization
is correctly performed).

\begin{figure}[ht]
\centering
\includegraphics[width=10cm]{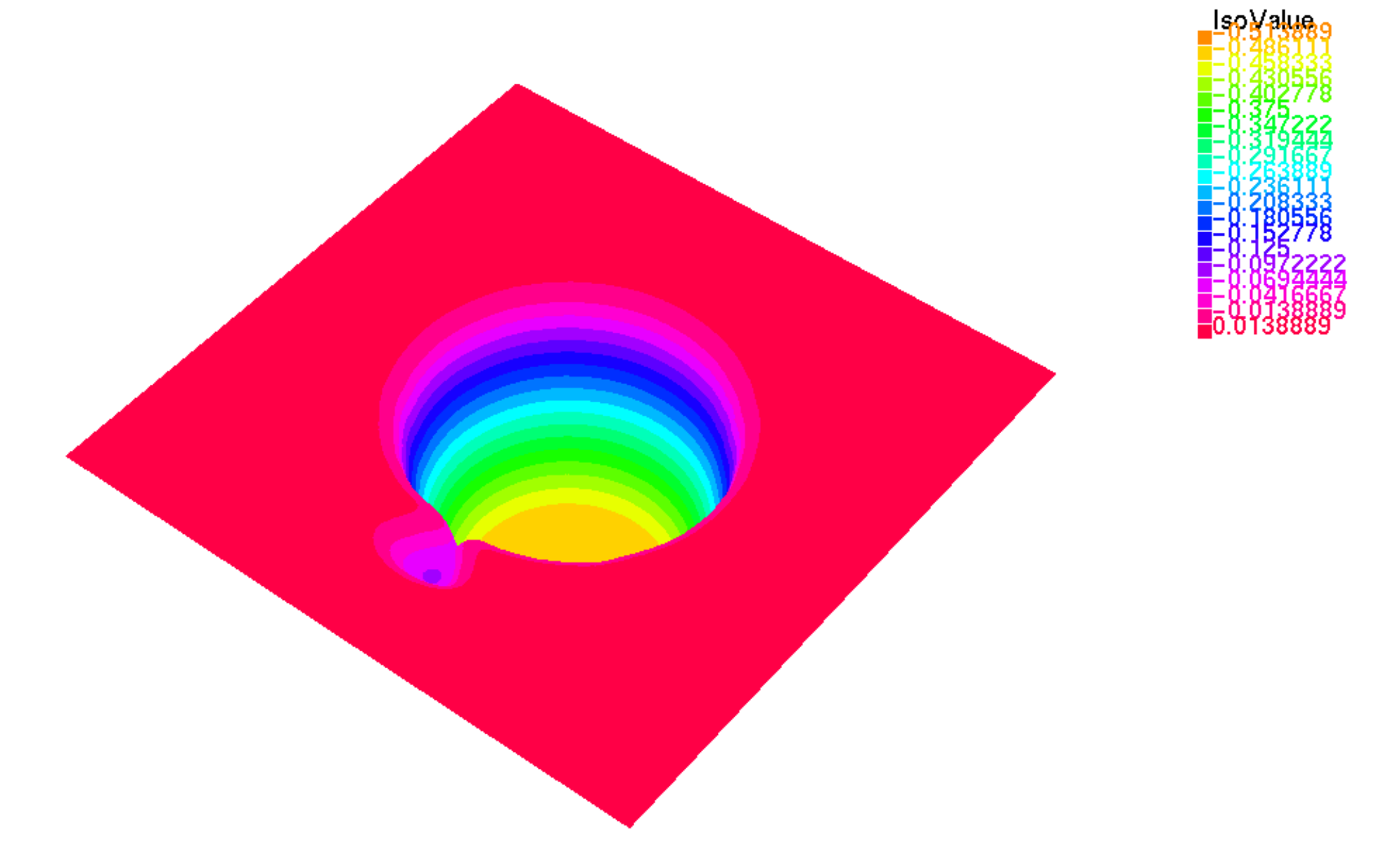}
\caption{Test 1, case a). The final state solution $y_h$. For the computed solution,
  we have $\max (y_h)=0$ and $\min (y_h)=-0.5$. The visualization software adds at
  the legend one level
greater than max and one level smaller than min.}
\label{fig:k2_kf150_yh}
\end{figure}

\medskip
\textbf{Case b). We use $\alpha=0$ and $I_0$ with 3 vertices.}

If $I_0$ is the set of indices of the vertices of the triangle $T\in \mathcal{T}_h$
containing $\mathbf{x}_0$,
the algorithm stops after 3 iterations, the values of the objective functions are:
$36.82$ (initial), $3.40515$, $0.000325$,
$7.05\times 10^{-7}$ (final).

\begin{figure}[ht]
\centering
\includegraphics[width=6cm]{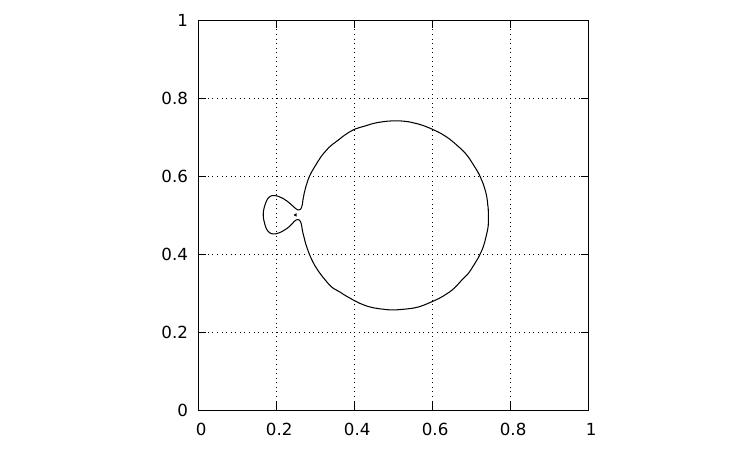}\quad
\includegraphics[width=6cm]{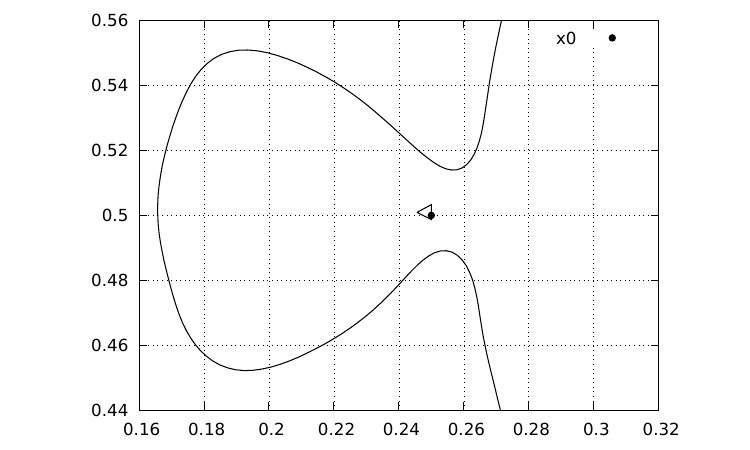}  
\caption{Test 1, case b).
  The final $\partial \Omega$ (left) and zoom around $\mathbf{x}_0$ (right).}
\label{fig:I0_triang}
\end{figure}

\newpage
We see in Figure \ref{fig:I0_triang} that the final $\partial \Omega$ has
two connected boundaries (topological optimization), but one is very small.
It is a triangular polygonal line containing $\mathbf{x}_0$.
The triangle, having as boundary the polygonal line, represents a hole for the optimal $\Omega$.
We point out that this triangle is not an element of the triangulation, otherwise we get
$g_h=0$ in this triangle, which contradicts (\ref{eq:1.7}).

At the final iteration, we have
$$
\nabla y_h(\mathbf{x}_0)=(-0.004556, -0.806676),\quad
\frac{\nabla g_h (\mathbf{x}_0)}{|\nabla g_h(\mathbf{x}_0)|}=
(-0.999978, 0.006689)
$$
then $\frac{\partial y_h}{\partial \mathbf{n}}(\mathbf{x}_0)-\alpha=-0.000839
= \frac{\partial y_h}{\partial \mathbf{n}}(\mathbf{x}_0)$.

\medskip
\textbf{Case c). We use $\alpha=1$ and $C=2$; $I_0$ has 13 elements.}

The algorithm stops after 3 iterations,
the values of the objective functions are: $25.69$ (initial), $1.45939$, $0.000395$,
$1.26\times 10^{-7}$ (final). 

\begin{figure}[ht]
\centering
\includegraphics[width=6cm]{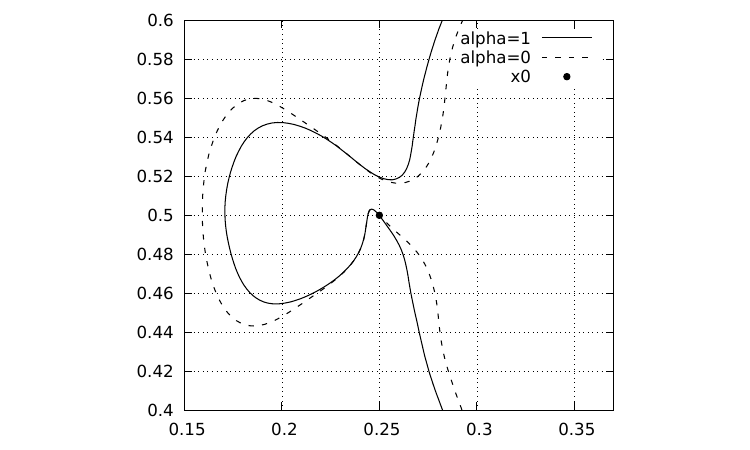}\quad
\includegraphics[width=5cm]{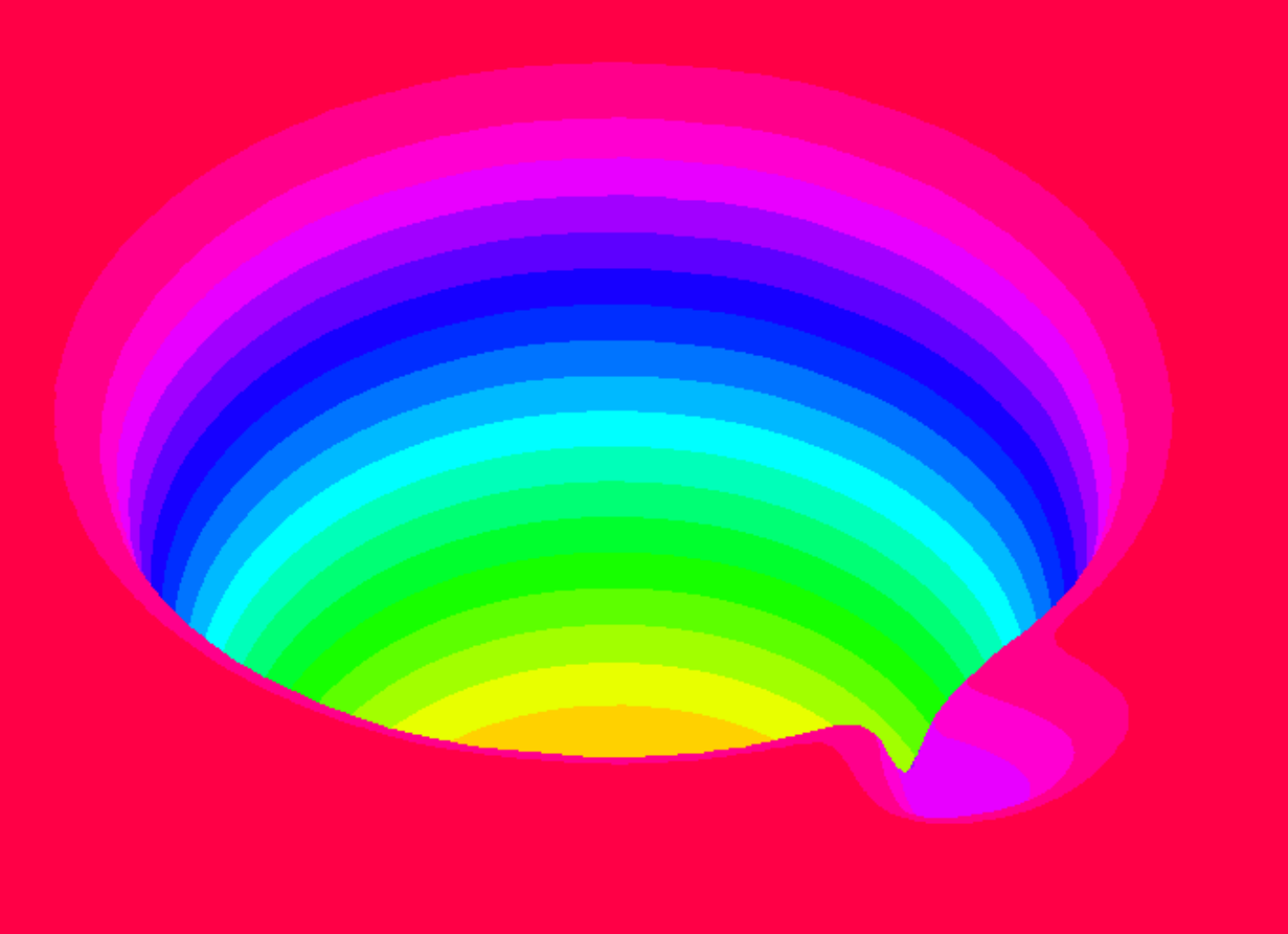}
\caption{Test 1, case c).
  The final $\partial \Omega$ (left, zoom) and the final state $y_h$ (right, zoom).}
\label{fig:alpha_1}
\end{figure}

At the final iteration, we obtained 
$$
\nabla y_h(\mathbf{x}_0)=(-1.00285, -0.475403),\quad
\frac{\nabla g_h (\mathbf{x}_0)}{|\nabla g_h(\mathbf{x}_0)|}=
(-0.999978, 0.006689)
$$
then $\frac{\partial y_h}{\partial \mathbf{n}}(\mathbf{x}_0)-\alpha=-0.000356$.
Some images for the final solution are plotted in Figure \ref{fig:alpha_1}.
Although in Figure \ref{fig:alpha_1}, we also see  a saddle point for $ y_h$, $\mathbf{x}_0$ is sufficiently
far from it such that  $\frac{\partial y_h}{\partial \mathbf{n}}(\mathbf{x}_0) -1 = -0.000356$.

We see that $\frac{\nabla g_h (\mathbf{x}_0)}{|\nabla g_h(\mathbf{x}_0)|}$ is the same for the three
cases a), b), c). It is correct, because $g_h$ is not modified during the algorithm in the vertices
$i\in I_0$. We have that $\mathbf{x}_0$ belongs to $\cup_{i\in I_0} \mathrm{supp} (\phi_i)$.
The set $I_0$ in the case b) is included in the set $I_0$ of cases a) and c).
The triangulation, $\mathbf{x}_0$ and the initial $g_h$ are the same for the three cases.

The holdall domain $D$ is defined by the user. For rectangular shape, we can built a mesh
composed only of right triangles or rectangular elements. Here, we use an unstructured triangulation
(irregular grid) which can be employed also for non rectangular $D$.

\medskip
\textbf{Case d). We use $\alpha=0$, $C=2$ and $-y_hp_h$ as descent direction.}

We can increase the regularity in the adjoint equation (\ref{adj}) by using
$\zeta_{\varepsilon_1}(\mathbf{x}-\mathbf{x}_0)$, a mollifier approximation of the Dirac
functional $\delta_{\mathbf{x}_0}$.
We write $\zeta_{\varepsilon_1}(\mathbf{x})
= \frac{1}{\varepsilon_1^2}\zeta\left( \frac{\mathbf{x}}{\varepsilon_1} \right)$, where
$\zeta\in \mathcal{C}^\infty(\mathbb{R}^2)$ has support in the unitary disk centered at $(0,0)$
and $\int_{\mathbb{R}^2} \zeta(\mathbf{x})\, d\mathbf{x}=1$, $\varepsilon_1>0$.

An approximate version of the adjoint state equation  (\ref{adj}) is: find $p\in H_0^1(D)$ such that
\begin{eqnarray}
  &&  \int_D \nabla p\cdot \nabla v\,d\mathbf{x}
  + \int_D \beta_{\varepsilon}^\prime(y_{\varepsilon})p v\,d\mathbf{x}
  + \frac{1}{\varepsilon} \int_D H_\varepsilon(g)p v\,d\mathbf{x}
  \nonumber\\
  &=& -\int_D 
2\left(\frac{\partial y_{\varepsilon}}{\partial \mathbf{n}}(\mathbf{x}) - \alpha\right)
\frac{\partial v}{\partial \mathbf{n}}(\mathbf{x})  
\zeta_{\varepsilon_1}(\mathbf{x}-\mathbf{x}_0)\,d\mathbf{x},
\quad \forall v\in H_0^1(D).
\label{p_reg}
\end{eqnarray}

Taking $v=u$ in (\ref{p_reg}) and using (\ref{3.7}), we get that
$$
\frac{1}{\varepsilon}\int_D H_\varepsilon^\prime(g)h y_{\varepsilon} p \,d\mathbf{x}
=\int_D 
2\left(\frac{\partial y_{\varepsilon}}{\partial \mathbf{n}}(\mathbf{x}) - \alpha\right)
\frac{\partial u}{\partial \mathbf{n}}(\mathbf{x})  
\zeta_{\varepsilon_1}(\mathbf{x}-\mathbf{x}_0)\,d\mathbf{x}
$$
and the right-hand side is an approximation of L defined by (\ref{eq:2.99}).
Since $H_\varepsilon^\prime \geq 0$, the left-hand side is negative, for $h=-y_{\varepsilon} p$ and can
be used in the gradient-type algorithms, inspired by \cite{MT2022}, although it has
a ``partial'' character.

All the data are the same as in the case a), but here we use the discretization (\ref{ph})
and in the Algorithm, the simplified descent direction
$-y_hp_h$ instead of $-\nabla \mathbb{J}_1$. That is, we use just a part of the
gradient computed in the previous section (but the descent properties are preserved according
to Fig. \ref{fig:test1_d_J1}).
The objective functions decreases from $36.82$ to $1.15\times 10^{-7}$, see Figure \ref{fig:test1_d_J1}.
The Algorithm stops after 4 iterations.

\begin{figure}[ht]
\centering
  \includegraphics[width=6cm]{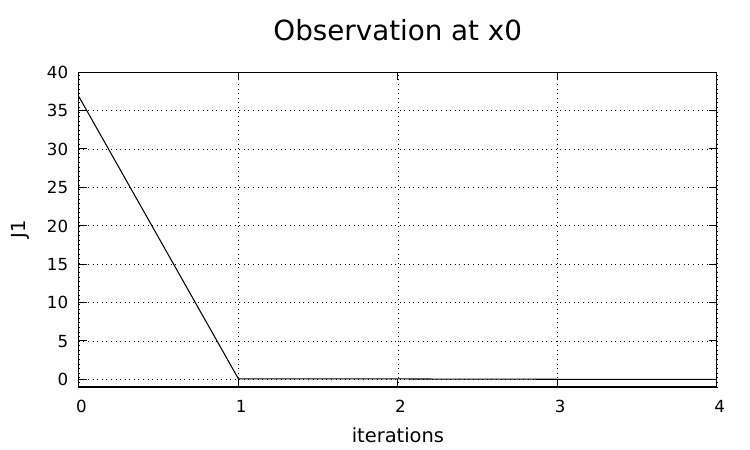}
  \caption{Test 1, case d). The history of the objective function.}
\label{fig:test1_d_J1}
\end{figure}

The finite element approximation of the adjoint state is: find $p_h \in V_h$ such that
\begin{eqnarray}
&&  \int_D \nabla p_h\cdot \nabla v_h\,d\mathbf{x}
+ \int_D \beta_{\eta,\varepsilon_2}^\prime(y_h-\varphi_h) p_h v_h\,d\mathbf{x}
+ \frac{1}{\varepsilon} \int_D H_\eta(g_h)p_h v_h\,d\mathbf{x}
\nonumber\\
&=& -\int_D 
2\left(\frac{\partial y_h}{\partial \mathbf{n}}(\mathbf{x}) - \alpha\right)
\frac{\partial v_h}{\partial \mathbf{n}}(\mathbf{x})  
\zeta_{\varepsilon_1}(\mathbf{x}-\mathbf{x}_0)\,d\mathbf{x},
\quad \forall v_h\in V_h.
\label{ph}
\end{eqnarray}
For the mollifier  approximation of the Dirac function, we use $\varepsilon_1=0.05$.

\begin{figure}[ht]
\centering
\includegraphics[width=6cm]{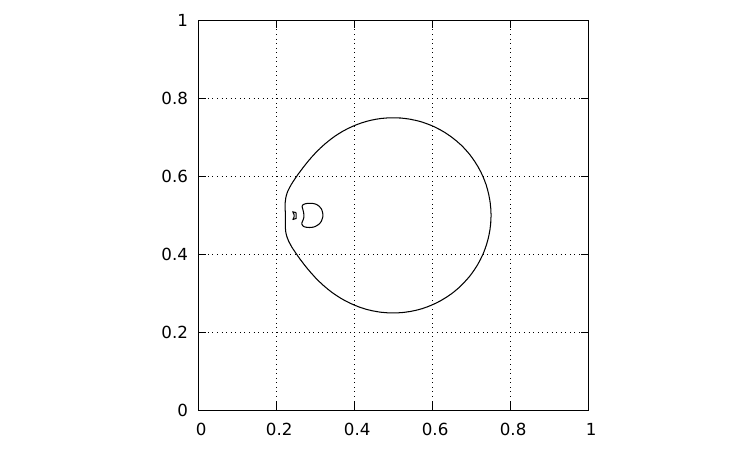}
\includegraphics[width=6cm]{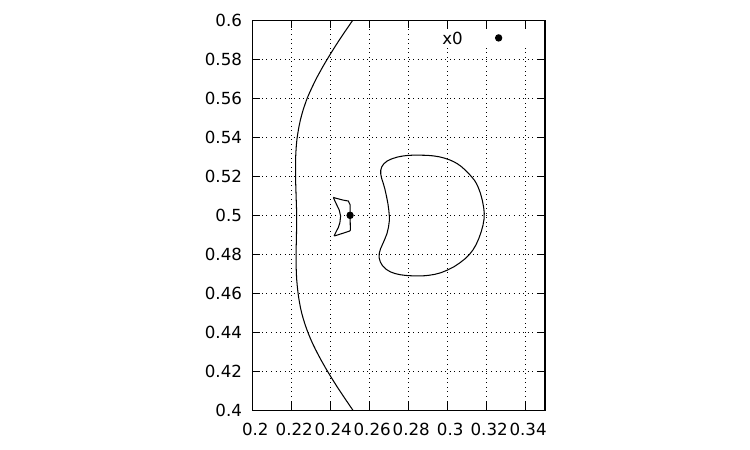}
\caption{Test 1, case d).
The final boundary of $\Omega$  (left),
the position of $\mathbf{x}_0$ and the
final boundary (right, zoom).}
\label{fig:test1_d_final_iso}
\end{figure}

We notice in Figure \ref{fig:test1_d_final_iso} that the final $\partial \Omega$ has
three connected components of the boundary (topological optimization) and the smallest one
is containing $\mathbf{x}_0$.
That is, the final $\Omega$ has two holes.

\bigskip
\textbf{Test 2.}

We can use  similar computations, when we have pointwise observations at a finite number
of fixed points $\mathbf{x}_0^j$, $1\leq j \leq \ell$. We assume that $g(\mathbf{x}_0^j)=0$, for all
$j$. The objective function is the sum of the expression (\ref{eq:2.2.1}) computed at each
point $\mathbf{x}_0^j$.
The set $I_0$ is, in this case, the indices of the vertices $A_i$ of $\mathcal{T}_h$
such that there exists $1\leq j \leq \ell$, $\|\mathbf{x}_0^j-A_i \| < hC$ with $C\geq 2$.
The descent direction is the sum of the terms in (\ref{descent_dir_J1}) computed at each
point $\mathbf{x}_0^j$.

We have $\ell=3$, the observation points are:
\begin{eqnarray*}
\mathbf{x}_0^1 &=& \left(0.5 + r_0\cos(\pi), 0.5 + r_0\sin(\pi)\right),\\
\mathbf{x}_0^2 &=& \left(0.5 + r_0\cos(\pi-\frac{\pi}{6}), 0.5 + r_0\sin(\pi-\frac{\pi}{6})\right),\\
\mathbf{x}_0^3 &=& \left(0.5 + r_0\cos(\pi+\frac{\pi}{6}), 0.5 + r_0\sin(\pi+\frac{\pi}{6})\right).
\end{eqnarray*}

For $C=3$ we get $I_0$ of 96 elements.
The others parameters are the same as in Test 1 and $\alpha=0$.
The algorithm stops after 5 iterations, the objective functions decreases from
$326.12$ to $1.64 \times 10^{-5}$,
see Figure \ref{fig:3_points_J1}.

In Figure \ref{fig:3_points_Omega_5} we plot the initial and final $\partial\Omega$ and
in Figure \ref{fig:3_points_yh} we show the final state solution $y_h$.
Final $\Omega$ seems to be not connected, while the support of $y_h$ is connected.
This phenomena appears since in the state variational inequality we use in the penalization term
$H_\eta(g_h)$ which is an approximation of the characteristic function of the final $\Omega$,
that is the approximations yield important perturbations on the result without
affecting its essence.

\begin{figure}[ht]
\centering
  \includegraphics[width=6cm]{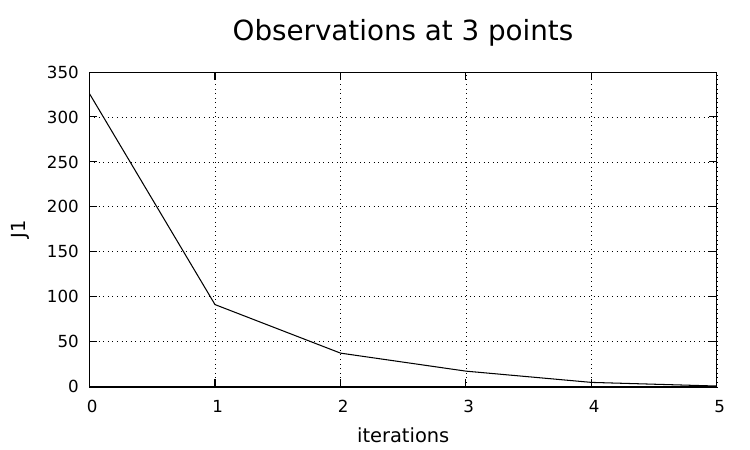}
  \caption{Test 2. The history of the objective function.}
\label{fig:3_points_J1}
\end{figure}

\begin{figure}[ht]
\centering
\includegraphics[width=6cm]{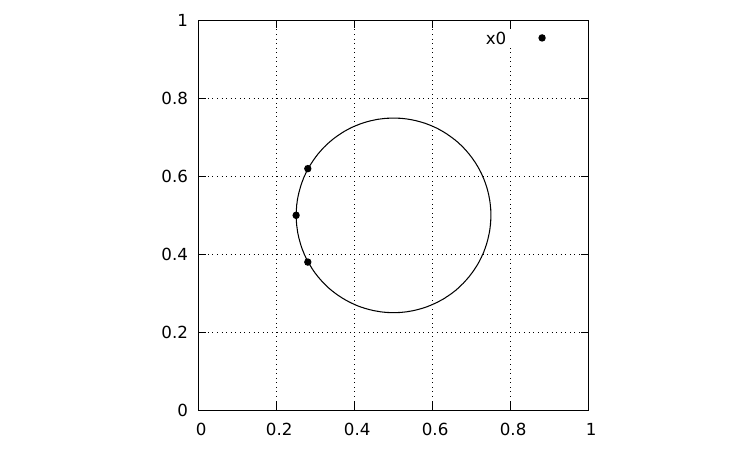}
\includegraphics[width=6cm]{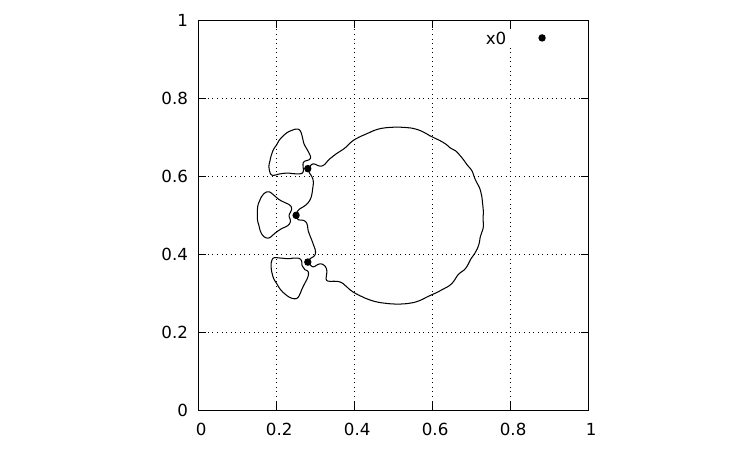}
  \caption{Test 2.The boundaries of $\Omega$: initial (left)
    and final (right).}
\label{fig:3_points_Omega_5}
\end{figure}

\newpage
\begin{figure}[ht]
\centering
\includegraphics[width=6cm]{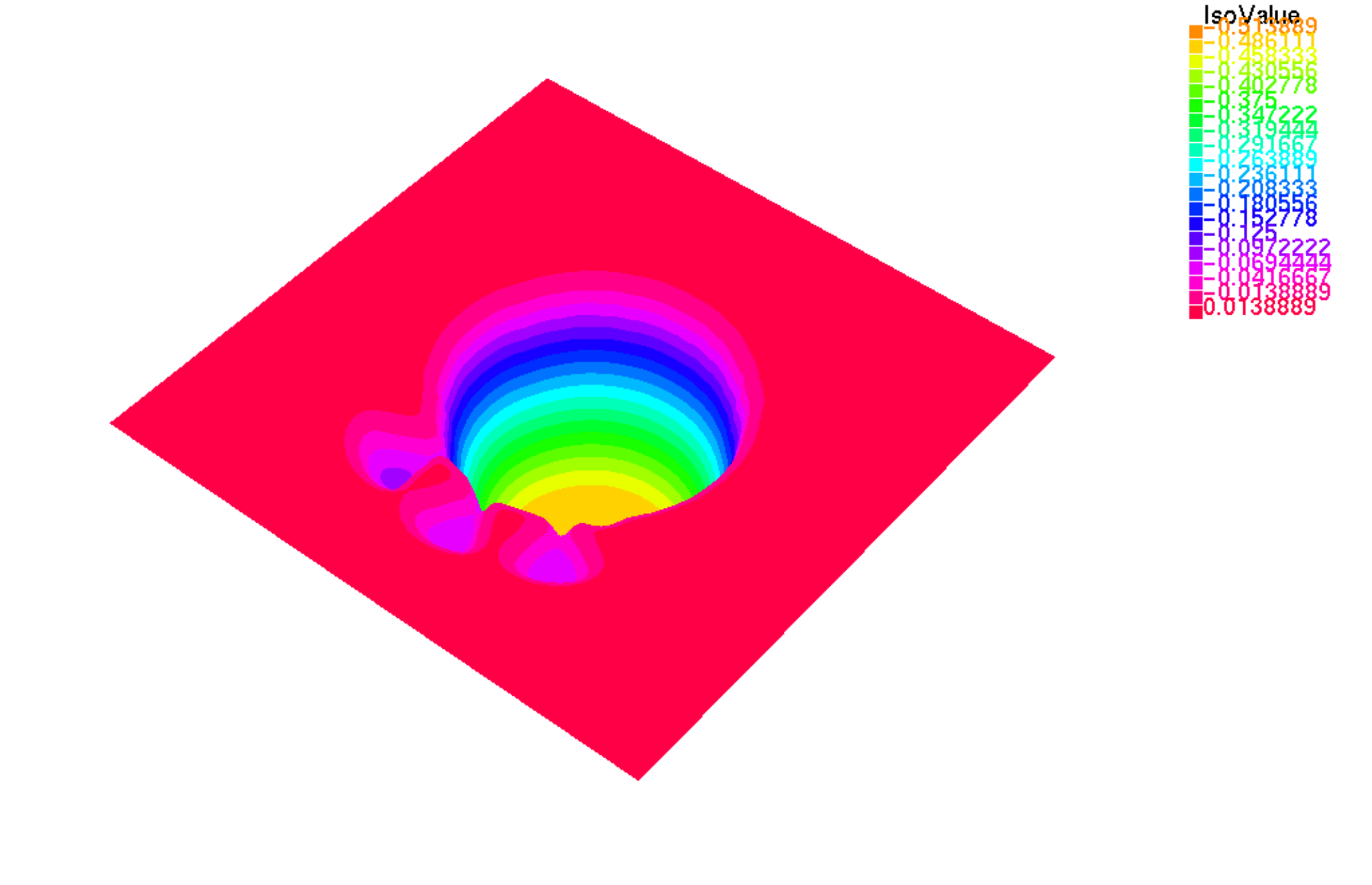}\quad
\includegraphics[width=6cm]{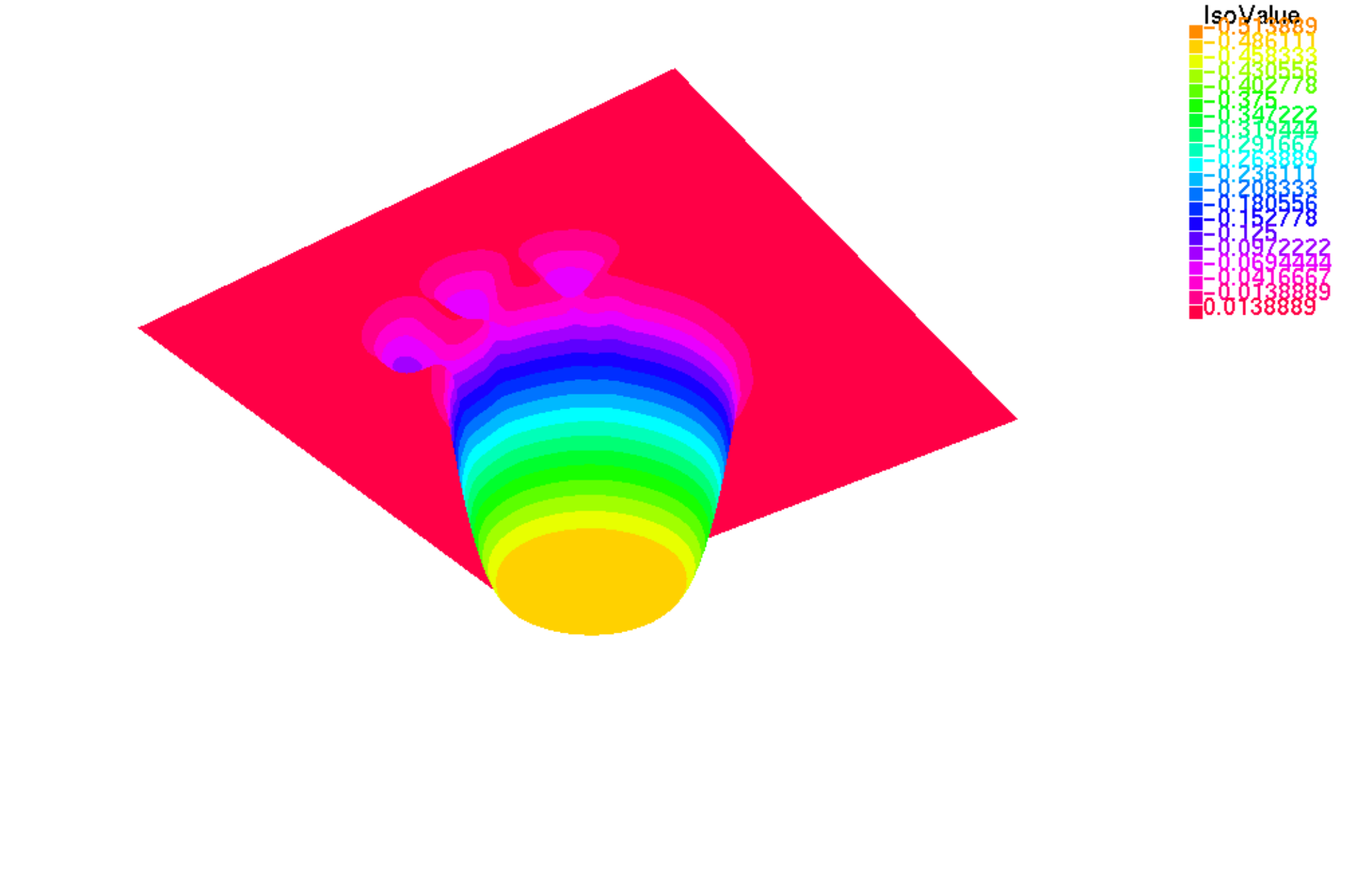}
  \caption{Test 2. The final state solution $y_h$, top view (left) and bottom view (right).}
\label{fig:3_points_yh}
\end{figure}

\end{document}